\newtheorem{thm}{Theorem}
\newtheorem{lemma}{Lemma}
\newtheorem{cor}{Corollary}
\newtheorem{remark}{Remark}
\numberwithin{equation}{section}
\def\dis{\displaystyle}
\def\R{\mathbb{R}}
\def\N{\mathbb{N}}
\def\e{{\epsilon}}
\def\D{\Delta}
\def\l{\left}
\def\r{\right}
\def\a{\alpha}
\def\b{\beta}
\def\G{\Gamma}
\def\g{\gamma}
\def\th{\theta}
\def\s{\sigma}
\def\z{\zeta}
\def\k{\kappa}
\def\la{\lambda}
\def\n{\nabla}
\def\p{\partial}
\def\P{{\mathbb{P}}}
\def\toP{\stackrel{\P}{\longrightarrow}}
\def\toD{\stackrel{\mathcal{D}}{\longrightarrow}}
\def\E{\mathbb{E}}
\def\mb#1{\mbox{\boldmath $#1$}}
\def\I{\mathbf{1}}
\def\L{\mathcal{L}}
\def\wh#1{\widehat{#1}} 
\def\ol#1{\overline{#1}} 
\def\wt#1{\widetilde{#1}} 
\def\df{{\rm d}}
\title{Confidence intervals of ruin probability under L\'evy surplus}
\author{Yasutaka Shimizu \\ 
{\it Department of Applied Mathematics} \\{\it Waseda University} \\ E-mail: {\tt shimizu@waseda.jp} }
\date{\today}
\begin{document}

\maketitle 

\begin{abstract} 

The aim of this paper is to construct the confidence interval of the ultimate ruin probability under the insurance surplus driven by a L\'evy process. Assuming a parametric family for the L\'evy measures, we estimate the parameter from the surplus data, and estimate the ruin probability via the {\it delta method}. However the asymptotic variance includes the derivative of the ruin probability with respect to the parameter, which is not generally given explicitly, and the confidence interval is not straightforward even if the ruin probability is well estimated. This paper gives the Cram\'er-type approximation for the derivative, and gives an asymptotic confidence interval of ruin probability.

\begin{flushleft}
{\it Key words:} L\'evy insurance risks; ruin probability; Cram\'er approximation; delta method; asymptotic confidence interval. \vspace{1mm}\\
{\it MSC2010:} 62G20 (91B30, 62M05).
\end{flushleft}
\end{abstract}

\section{Introduction}
\subsection{Ruin probability under L\'evy surplus}

Consider the following insurance surplus:  
\begin{align}
R_t = u + ct + \s W_t - S_t,\quad t\ge 0, \label{model}
\end{align}
where $u,c\ge 0$ are constants, $W$ is a standard Brownian motion, and $S$ is a L\'evy subrdinator with L\'evy measure $\nu_\a$ on $(0,\infty)$ satisfying that 
$\int_0^1 z\nu_\a(\df z) < \infty$, and $\a$ is a parameter. The Laplace transform of $S$ is given by 
\[
\E\l[e^{-u S_t}\r] = \exp\l(t \int_0^\infty \l(1 - e^{-uz}\r)\,\nu_\a(\df z)\r),\quad u\ge 0, 
\]
and the mean of $S_1$ exists: 
\[
m_\a := \E[S_1] = \int_0^\infty z\,\nu_\a(\df z) < \infty. 
\]
As is well known, the following condition is needed to avoid the almost sure ruin. 
\begin{flushleft}
{\bf [NPC]} ({\it the net profit condition})
\[
c > m_\a. 
\]
\end{flushleft}

We use the following notation throughout the paper: 
\[
D=\s^2/2,\quad \Pi_\a(z) =\nu_\a(z,\infty), 
\]
and 
\[
\nu_{I,\a}(x) = \int_0^x \Pi_\a(z)\,\df z,\quad \ol{\nu}_{I,\a}(x) = \int_x^\infty \Pi_\a(z)\,\df z. 
\]
Moreover let $k_D$ be the probability density function of the exponential distribution with mean $D/c$: 
\[
k_D(x) = \frac{c}{D} e^{-\frac{c}{D}x},\quad x\ge 0. 
\]
Letting $\th=(\a,D)$, we consider the ultimate ruin probability give by 
\[
\psi_\th(u) = \P\l(\inf_{t\ge 0}R_t < 0\,\Big|\,R_0=u\r), \quad u\ge 0. 
\]

Hereafter the notation $\star$ stands for the convolution (of Lebesgue integral)
\[
f\star g(u) = \int_0^u f(u-x)g(x)\,\df x
\]
for functions $f,g\,:\,(0,\infty) \to \R$. 

According to Huzack {\it et al.} \cite{hetal04}, Theorem 3.1, the following {\it defective renewal equation} for the ruin probability $\psi_\th$ is straightforward; see also Biffis and Morales \cite{bm10}, for more general case dealt with the {\it Gerber-Shiu function}. 

\begin{thm}\label{thm:dre-psi}
Under the condition [NPC], it holds that 
\begin{align}
\psi_\th(u) = \psi_\th\star g_\th(u) + h_\th(u),\quad u\ge 0, \label{dre-psi}
\end{align}
where 
\[
g_\th(u) = \frac{1}{c} k_D\star \Pi_\a(u),\quad h_\th(u) = \frac{1}{c}k_D\star \ol{\nu}_{I,\a}(u) + \int_u^\infty k_D(z)\,\df z
\]
\end{thm}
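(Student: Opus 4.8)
The plan is to establish \eqref{dre-psi} by comparing Laplace transforms, using the classical scale-function description of the ruin probability of a spectrally negative L\'evy process. Writing $R_t = u + X_t$ with $X_t = ct + \s W_t - S_t$, the process $X$ is spectrally negative with Laplace exponent $\kappa(s) = \log\E[e^{sX_1}] = cs + Ds^2 - \int_0^\infty(1-e^{-sz})\,\nu_\a(\df z)$ and $\kappa'(0+) = c - m_\a$, which is positive by [NPC]; hence $X$ drifts to $+\infty$ and $\psi_\th(u) = \P(\inf_{t\ge0}(u+X_t) < 0)$. I would first record the transforms that enter the identity. Integrating by parts with $\nu_\a(\df z) = -\df\Pi_\a(z)$ --- the boundary term at the origin vanishing because $z\Pi_\a(z)\to 0$, itself a consequence of $\int_0^1 z\,\nu_\a(\df z)<\infty$ --- gives $\kappa(s) = s\,(c + Ds - \wh\Pi_\a(s))$ with $\wh\Pi_\a(s):=\int_0^\infty e^{-sz}\Pi_\a(z)\,\df z$. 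Since $m_\a = \int_0^\infty\Pi_\a(z)\,\df z$, a further interchange of integrals yields $\wh{\ol{\nu}_{I,\a}}(s) = (m_\a - \wh\Pi_\a(s))/s$, while $\wh k_D(s) = c/(Ds+c)$ and $\int_0^\infty e^{-su}\int_u^\infty k_D(z)\,\df z\,\df u = D/(Ds+c)$.

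From these, $\wh g_\th(s) = \wh\Pi_\a(s)/(Ds+c)$ and $\wh h_\th(s) = (Ds + m_\a - \wh\Pi_\a(s))/(s(Ds+c))$; since $\wh\Pi_\a(s) < m_\a < c \le Ds+c$ for $s>0$ by [NPC], $\wh g_\th(s)<1$ and
\[
\frac{\wh h_\th(s)}{1 - \wh g_\th(s)} = \frac{Ds + m_\a - \wh\Pi_\a(s)}{s\,(Ds + c - \wh\Pi_\a(s))}.
\]
On the other hand, the Pollaczek--Khinchine / scale-function identity for spectrally negative L\'evy processes gives $1 - \psi_\th = \kappa'(0+)\,W$ with $W$ the $0$-scale function, $\int_0^\infty e^{-su}W(u)\,\df u = 1/\kappa(s)$; hence
\[
\wh\psi_\th(s) = \frac1s - \frac{\kappa'(0+)}{\kappa(s)} = \frac1s\cdot\frac{Ds + m_\a - \wh\Pi_\a(s)}{Ds + c - \wh\Pi_\a(s)},
\]
which is exactly $\wh h_\th(s)/(1-\wh g_\th(s))$, i.e. $\wh\psi_\th(s)\,(1-\wh g_\th(s)) = \wh h_\th(s)$. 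As $\psi_\th$, $g_\th$ and $h_\th$ are bounded, the Laplace transform is injective, so this algebraic identity is equivalent to \eqref{dre-psi}.

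An alternative route, closer to \cite{hetal04}, works in the time domain. Since $\s>0$, the survival probability $\phi_\th = 1-\psi_\th$ is smooth enough on $(0,\infty)$ and satisfies $\A\phi_\th = 0$ there, where $\A f(x) = cf'(x) + Df''(x) + \int_0^\infty[f(x-z)-f(x)]\,\nu_\a(\df z)$ and $\phi_\th\equiv 0$ on $(-\infty,0)$; integrating the jump term by parts and using $\psi_\th(0)=1$ collapses this to $D\psi_\th'' + c\psi_\th' = \Pi_\a\star\psi_\th'$. One integration on $[0,u]$ (with Fubini) produces a first-order equation whose only free constant, $\psi_\th'(0+)$, is pinned down to $(m_\a - c)/D$ by letting $u\to\infty$ and using $\psi_\th(u),\psi_\th'(u),(\psi_\th\star\Pi_\a)(u)\to0$ and $\nu_{I,\a}(u)\to m_\a$; the identity $\nu_{I,\a} + \ol{\nu}_{I,\a}\equiv m_\a$ then puts the equation in the form $D\psi_\th' + c\psi_\th = \ol{\nu}_{I,\a} + \psi_\th\star\Pi_\a$. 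Solving this linear ODE, i.e. convolving with $\tfrac1c k_D$ (the Green function of $D\tfrac{\df}{\df u}+c$) and adding the homogeneous term $e^{-cu/D} = \int_u^\infty k_D$, gives $\psi_\th = h_\th + \psi_\th\star(\tfrac1c k_D\star\Pi_\a) = h_\th + \psi_\th\star g_\th$ by commutativity of convolution.

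The conceptual content is light; the steps I would be most careful about are the integrability bookkeeping around $z=0$ when $\nu_\a$ is infinite --- everywhere controlled by $\int_0^1 z\,\nu_\a(\df z)<\infty$, which in particular forces the boundary terms in the integrations by parts to vanish --- and, for the time-domain variant, the smoothness of $\psi_\th$, which is where I would lean on the Gaussian component $\s>0$ (or simply quote the scale-function representation, which settles it). I expect no genuine obstacle beyond this bookkeeping.
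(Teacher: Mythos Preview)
The paper does not actually prove this statement: it simply records the defective renewal equation as a direct consequence of Theorem~3.1 in Huzak \textit{et al.}~\cite{hetal04} (and the generalisation in Biffis--Morales~\cite{bm10}), with no argument of its own. Your two routes are therefore both \emph{more} than the paper provides, and both are sound. The Laplace-transform computation is clean and correct: the identities $\wh g_\th(s)=\wh\Pi_\a(s)/(Ds+c)$, $\wh h_\th(s)=(Ds+m_\a-\wh\Pi_\a(s))/(s(Ds+c))$ and the scale-function formula $\wh\psi_\th(s)=1/s-\kappa'(0+)/\kappa(s)$ combine exactly as you say; injectivity of the Laplace transform on bounded functions then finishes. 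The only points I would flag are cosmetic: the scale-function representation you invoke already presupposes $D>0$ (which the paper tacitly assumes via $k_D$), and the boundary behaviour $z\Pi_\a(z)\to 0$ at the origin deserves the one line you give it. Your time-domain variant is essentially the argument underlying \cite{hetal04}: generator equation $\Rightarrow$ integro-differential equation $\Rightarrow$ integrate once and fix the constant via the tail behaviour $\Rightarrow$ convolve with the Green kernel $c^{-1}k_D$. The step ``$\psi_\th(0)=1$ and $\psi_\th'(0+)=(m_\a-c)/D$'' is where the Gaussian component $\s>0$ is doing real work, and you are right to single it out; in the paper's setting this is legitimate. Either approach would be acceptable here.
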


Consider the {\it modified Lundberg equation}: 
\begin{align}
\k_\th(r):= -cr + D r^2 + \int_0^\infty (e^{rz} - 1)\,\nu_\a(\df z) = 0. \label{l-eq}
\end{align}
(Note that so-called the ``Lundberg equation" is ``$\log \E[e^{r(R_1-u)}]=\k_\th(-r)=0$") 
Provided [NPC] and that, for a sufficiently large $r>0$, 
\[
\int_1^\infty e^{rz}\,\nu_\a(\df z) < \infty, 
\]
the positive solution to the equation exists, say $r = \g_\th>0$, and it is called {\it adjustment coefficient}. 
\begin{remark}\label{rem:gcd}
If 
\begin{align}
\int_1^\infty e^{\frac{c}{D}z}\,\nu_\a(\df z) < \infty, \label{gcd-cond}
\end{align}
then the adjustment coefficient $\g_\th$ exists since it holds that 
\[
\k_\th(0)=\k_\th(\g_\th)=0;\quad \k'_\th(0)<0;\quad \k_\th(c/D) = \int_0^\infty \l(e^{\frac{c}{D}z} - 1\r)\nu_\a(\df z) > 0. 
\]
These facts implies that 
\begin{align}
0< \g_\th < \frac{c}{D}, \label{gcd}
\end{align}
which also holds true if $\g_\th$ exists unless \eqref{gcd-cond} holds. 
\end{remark}

Multiplying $e^{\g_\th u}$ to the both sides of \eqref{dre-psi}, 
\begin{align}
\wt{\psi}_\th(u) = \wt{\psi}_\th\star \wt{g}_\th(u) + \wt{h}_\th(u), \label{z-tilde}
\end{align}
where $\wt{f}(u)= e^{\g_\th u}f(u)$ for the function $f$. 
Since it is easily checked (see the proof of Theorem \ref{thm:cl-approx}) that 
\begin{align}
\int_0^\infty \wt{g}_\th(z)\,\df z = 1,  \label{g-tilde}
\end{align}
the equation \eqref{z-tilde} is the (proper) renewal equation. 
Hence the usual argument using ``Key Renewal Theorem" yields that the following {\it Cram\'er-type approximation} of the ruin probability. 
\begin{thm}\label{thm:cl-approx}
Suppose the condition [NPC] and the adjustment coefficient $\g_\th>0$ exists. 
Moreover, suppose the moment condition such as 
\[
\int_1^\infty ze^{\g_\th z}\,\nu_\a(\df z) < \infty. 
\]
Then it holds that 
\begin{align}
\psi_\th(u) \sim C_\th\,e^{-\g_\th u},\quad u\to \infty, \label{cl-approx}
\end{align}
where 
\[
C_\th= \frac{c-m_\a}{\dis \int_0^\infty ze^{\g_\th z}\,\nu_\a(\df z) - c + 2D\g_\th}.
\]
\end{thm}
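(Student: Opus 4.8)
The plan is to convert the defective renewal equation \eqref{dre-psi} into a \emph{proper} one by the exponential tilting $\wt{f}(u)=e^{\g_\th u}f(u)$, as already indicated in \eqref{z-tilde}, and then to read off the limit of $\wt{\psi}_\th$ from the Key Renewal Theorem. Since $\psi_\th(u)=e^{-\g_\th u}\wt{\psi}_\th(u)$, the resulting statement $\wt{\psi}_\th(u)\to C_\th$ is precisely \eqref{cl-approx}, with
\[
C_\th=\frac{\int_0^\infty \wt{h}_\th(z)\,\df z}{\int_0^\infty z\,\wt{g}_\th(z)\,\df z}.
\]
Every computation below reduces to a one-dimensional Laplace transform evaluated at $s=-\g_\th$, which is legitimate because $0<\g_\th<c/D$ by \eqref{gcd} (this is exactly what keeps $e^{\g_\th u}k_D(u)$ integrable), and because the identity
\[
\int_0^\infty\l(e^{\g_\th z}-1\r)\nu_\a(\df z)=c\g_\th-D\g_\th^2=\g_\th(c-D\g_\th)
\]
is just $\k_\th(\g_\th)=0$ rewritten.

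First I would verify \eqref{g-tilde}. Writing $\wt{g}_\th(u)=\tfrac1c\,(\wt{k_D}\star\wt{\Pi}_\a)(u)$, since tilting commutes with convolution, the total mass factorizes: $\int_0^\infty e^{\g_\th u}k_D(u)\,\df u=\tfrac{c}{c-D\g_\th}$ and, by Fubini, $\int_0^\infty e^{\g_\th u}\Pi_\a(u)\,\df u=\g_\th^{-1}\int_0^\infty(e^{\g_\th z}-1)\nu_\a(\df z)=c-D\g_\th$; the product times $1/c$ is $1$. Hence \eqref{z-tilde} is a proper renewal equation.

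Next I would check that $\wt{h}_\th\ge 0$ and is directly Riemann integrable; this is the place where the moment hypothesis $\int_1^\infty z e^{\g_\th z}\nu_\a(\df z)<\infty$ is used, since it guarantees both $\int_0^\infty\wt{h}_\th<\infty$ and enough decay/regularity (via the usual renewal-theoretic criteria for a non-negative integrable function that is monotone, or continuous and dominated by such). The Key Renewal Theorem then gives $\wt{\psi}_\th(u)\to \int_0^\infty\wt{h}_\th(z)\,\df z\big/\int_0^\infty z\,\wt{g}_\th(z)\,\df z$ as $u\to\infty$. For the numerator, using $\int_0^\infty\Pi_\a(z)\,\df z=m_\a$ together with the Laplace identities above, the term $\int_u^\infty k_D$ contributes $\tfrac{D}{c-D\g_\th}$ and the convolution term contributes $\tfrac{c-D\g_\th-m_\a}{\g_\th(c-D\g_\th)}$, so $\int_0^\infty\wt{h}_\th(z)\,\df z=\tfrac{c-m_\a}{\g_\th(c-D\g_\th)}$. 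For the denominator, either differentiate the tilted Laplace transform at $s=-\g_\th$ or use the convolution–moment formula $\int u(f\star g)=(\int uf)(\int g)+(\int f)(\int ug)$; a short computation (using $\k_\th(\g_\th)=0$ again, and the moment hypothesis, which makes $\int_0^\infty u e^{\g_\th u}\Pi_\a(u)\,\df u<\infty$) gives $\int_0^\infty z\,\wt{g}_\th(z)\,\df z=\tfrac{\int_0^\infty z e^{\g_\th z}\nu_\a(\df z)-c+2D\g_\th}{\g_\th(c-D\g_\th)}=\tfrac{\k_\th'(\g_\th)}{\g_\th(c-D\g_\th)}$. Dividing numerator by denominator, the common factor $\g_\th(c-D\g_\th)$ cancels and leaves exactly the stated $C_\th$.

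The main point requiring care is the direct Riemann integrability of $\wt{h}_\th$, together with the Fubini and differentiation-under-the-integral steps, all of which rest on the stated moment condition. It is also worth noting, to see that $C_\th$ is a genuine positive constant, that $c-m_\a>0$ by [NPC] and $\k_\th'(\g_\th)>0$ by convexity of $\k_\th$ (recall $\k_\th(0)=\k_\th(\g_\th)=0$ and $\k'_\th(0)<0$), so the denominator $\int_0^\infty z e^{\g_\th z}\nu_\a(\df z)-c+2D\g_\th$ is strictly positive.
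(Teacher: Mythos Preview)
Your proposal is correct and follows essentially the same route as the paper: exponential tilting of the defective renewal equation \eqref{dre-psi}, verification that $\wt{g}_\th$ is a probability density via the Lundberg identity, and application of the Key Renewal Theorem to evaluate $\lim_{u\to\infty}\wt{\psi}_\th(u)$ as the ratio $\int_0^\infty\wt{h}_\th/\int_0^\infty z\,\wt{g}_\th$. Your use of the factorization $\wt{f\star g}=\wt f\star\wt g$ and the convolution--moment identity streamlines the Fubini computations in the paper, and your explicit mention of the direct Riemann integrability of $\wt{h}_\th$ and of the positivity of $C_\th$ (via $\k_\th'(\g_\th)>0$) are points the paper leaves implicit.
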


\begin{proof}
This result is not new, but we will describe the proof since no explicit proof can be found anywhere. 
Therefore we will give the detailed proof here. 

We start off by the renewal-type equation \eqref{z-tilde}: 
\begin{align*}
\wt{\psi}_\th(u) = \wt{\psi}_\th\star \wt{g}_\th(u) + \wt{h}_\th(u), 
\end{align*}
where 
\begin{align*}
\wt{g}_\th(u) &= \frac{1}{c}e^{\g_\th u} k_D\star \Pi_\a(u), \quad \wt{h}_\th(u) = \frac{1}{c}e^{\g_\th u} k_D\star \ol{\nu}_{I,\a}(u) + e^{(\g_\th - c/D) u}. 
\end{align*}
At the beginning, we will confirm that the function $\wt{g}_\th$ is the probability density. 
By Fubini's theorem, 
\begin{align*}
\int_0^\infty \wt{g}_\th(z)\,\df z &= \frac{1}{c}\int_0^\infty\l(\int_0^z e^{\g_\th (z-x)} k_D(z-x) \cdot e^{\g_\th x}\Pi_\a(x)\,\df x\r)\,\df z \\
&= \frac{1}{D}\int_0^\infty e^{\g_\th x}\Pi_\a(x) \l(\int_x^\infty e^{(\g_\th-c/D) (z-x)} \,\df z\r)\,\df x
\end{align*}
Here we note from Remark \ref{rem:gcd} that \eqref{gcd} at least holds true if $\g_\th$ exists, that is, 
\[
e^{(\g_\th - c/D)x}\to 0,\quad x\to \infty. 
\]
Therefore, using Fubini's theorem again, we have that 
\begin{align*}
\int_0^\infty \wt{g}_\th(z)\,\df z 
&=  \frac{1}{D\g_\th - c}\int_0^\infty e^{\g_\th x} \,\df x\int_x^\infty \nu_\a(\df z)  \\
&= \frac{1}{D\g_\th - c}\int_0^\infty \l(\int_0^z e^{\g_\th x} \,\df x\r)\,\nu_\a(\df z) \\
&= \frac{1}{D\g^2_\th - c\g_\th} \int_0^\infty \l(e^{\g_\th z} - 1\r)\,\nu_\a(\df z) = 1. 
\end{align*}
We used the Lundberg's identity $\k_\th(\g_\th)=0$ for the last equality. Hence the equation \eqref{z-tilde} is the ``proper" renewal equation. 
Then we can apply the ``Key Renewal Theorem" (e.g., Rolski {\it et al.} \cite{retal99}, Theorem 6.1.11) to $\wt{\psi}_\th$ to obtain that 
\[
\lim_{u\to \infty}\wt{\psi}_\th(u) = \frac{\dis \int_0^\infty \wt{h}_\th(x)\,\df x}{\dis  \int_0^\infty x\wt{g}_\th(x)\,\df x}. 
\]
We use Fubini's theorem repeatedly to obtain that 
\begin{align*}
\int_0^\infty x\wt{g}_\th(x)\,\df x &= \frac{1}{\g_\th(c-D\g_\th)} \int_0^\infty z e^{\g_\th z}\,\nu_\a(\df z) + \frac{c-2D\g_\th}{\g_\th^2(c-D\g_\th)^2} \int_0^\infty \l(e^{\g_\th z} - 1\r)\,\nu_\a(\df z), \\
\int_0^\infty \wt{h}_\th(x)\,\df x&= \frac{1}{\g_\th(c-D\g_\th)}\l[\frac{1}{\g_\th}\int_0^\infty \l(e^{\g_\th z} - 1\r)\,\nu_\a(\df z) - m_\a\r] + \frac{D}{c-D\g_\th}
\end{align*}
Using the Lundberg identity: 
\[
\k_\th(\g_\th) = 0\quad \Leftrightarrow\quad c\g_\th - D\g_\th^2 = \int_0^\infty (e^{\g_\th z} - 1)\,\nu_\a(\df z),
\]
we obtain that 
\begin{align*}
\lim_{u\to \infty}\wt{\psi}_\th(u) &= \frac{\frac{1}{\g_\th(c-D\g_\th)} \l(c-D\g_\th - m_\a\r) + \frac{D}{c-D\g_\th}}{\frac{1}{\g_\th(c-D\g_\th)}\int_0^\infty z e^{\g_\th z}\,\nu_\a(\df z) - +\frac{c-2D\g_\th}{\g_\th}} \\
&=  \frac{c-m_\a}{\int_0^\infty ze^{\g_\th z}\,\nu_\a(\df z) - c + 2D\g_\th}. 
\end{align*}
This completes the proof. 
\end{proof}

\subsection{Parametric inference for the estimator of ruin probability}\label{sec:par-inf}

To consider the parametric inference for the ruin probability, we prepare a parametric family for the L\'evy measures 
\[
{\cal P}_\Xi = \l\{\nu_\a\,|\,\a\in \Xi\r\},
\]
where $\Xi$ is a subset of $\R^p$, and consider the parameter space 
\[
\th = (\a,D) \in \Xi\times \Lambda =:\Theta. 
\]
We assume that $\Theta$ is a bounded, compact and convex subset of $\R^{p+1}$. 
We further assume that the true value of the parameter $\th$, say $\th_0$, belongs to the interior of $\Theta$: 
\[
\th_0 = (\a_0,D_0)\in int(\Theta),\quad D_0 := \frac{\s_0^2}{2}. 
\]

Now, suppose that the surplus process $R$ is observed in $[0,T]$-time interval. 
Although there are many possibilities for the sampling scheme of the L\'evy surplus, which will be described later, 
we now suppose that a consistent estimator for $\th_0$ is obtained based on some data set in $[0,T]$, say 
\[
\wh{\th}_T \toP \th_0,\quad T\to \infty. 
\]
Furthermore, suppose that a limit in law of $\wh{\th}_T$, say $Z$, is found as follows: 
\[
\sqrt{T}(\wh{\th}_T - \th_0) \toD Z, \quad T\to \infty. 
\]
Then it holds by the {\it delta method} that 
\[
\sqrt{T}(\psi_{\wh{\th}_T} - \psi_{\th_0}) \toD \dot{\psi}_\th^\top Z,\quad T\to \infty, 
\]
where $\dot{\psi}_\th = \l(\frac{\p\psi_{\th}}{\p \a_1},\dots, \frac{\p\psi_{\th}}{\p {\a_p}},\frac{\p \psi_\th}{\p D}\r)^\top$. 
To construct the confidence interval of $\psi_{\th_0}$, we need to estimate  the derivative $\dot{\psi}_{\th_0}$. 
However it does not have the closed expression as similarly to $\psi_{\th}$ as seen later. 

In this paper, we will investigate the Cram\'er-type asymptotic formula for $\dot{\psi}_{\th}(u)$ as $u\to \infty$, 
and obtain a non trivial limit in law of $\sqrt{T}(\psi_{\wh{\th}_T}(u) - \psi_{\th_0}(u))$ as $T\to \infty$ and $u\to \infty$ at the same time.

\section{On the derivatives $\dot{\psi}_\th$ and $\ddot{\psi}_\th$}
\subsection{Notation and assumption}

We make the notation used in this paper. 

\begin{itemize}
\item For functions $f,\,g$, we write $f(u)\sim g(u),\quad u\to \infty$ if $\dis \lim_{u\to \infty}f(u)/g(u) = 1$. 

\item For positive functions $f,\,g$, we write $f(u)\lesssim g(u)$ if there exists a constant $C>0$ such that $f(u)\le C\cdot g(u)$ for any $u$. 
In particular, $f(u)\lesssim 1$ implies that $f$ is uniformly bounded with respect to $u$.

\item For $l=0,1,2,\dots,$ we denote by $\n_\th^l f$ the $l(p+1)$-dimensional array of the partial derivatives of $f$ with respect to the components of $\th$. 
In particular, 
\begin{align*}
\dot{f}_\th &= \n_\th f_\th = \l(\frac{\p f_\th}{\p \a_1},\dots,\frac{\p f_\th}{\p \a_p}, \frac{\p f_\th}{\p D},\r)^\top, \\
\ddot{f}_\th&= \n_\th^2f_\th=\begin{pmatrix} \l(\frac{\p^2 f_\th}{\p \a_i\p{\a_j}}\r)_{i,j} & \l(\frac{\p^2 f_\th}{\p \a_i\p{D}}\r)_{i} \\\l(\frac{\p^2 f_\th}{\p D\p{\a_j}}\r)_{j} & \frac{\p^2 f_\th}{\p D^2}\end{pmatrix}. 
\end{align*}

\item $\mb{O}_{ij}$ is the $i\times j$ zero matrix. 

\item $\|x \|$ is the Eucridian norm of $x$ in whichever space it lies. 

\item For functions $f=(f_1,\dots,f_k)^\top:\R_+\to \R^k$ and $g=(g_1,\dots,g_l)^\top:\R_+\to \R^l$, 
\[
f\star g(u) = \l(f_i\star g_j(u)\r)_{1\le i\le k,1\le j\le l} \in \R^k\otimes \R^l
\]

\item Laplace transform: for a function $g:\R_+\to \R^k$ with $\int_0^\infty |g(x)|\,\df x < \infty$, 
\[
\L g(r) = \int_0^\infty e^{-r x}g(x)\,\df x,\quad r\ge 0.  
\]
Note that the integral is elementwise if $k\ge 2$. 

\end{itemize}

To investigate the derivatives of $\psi_\th$ with respect to $\th$, we make the following assumptions. 

\begin{flushleft}
{\bf [MO]}\ \ For each $\th\in \Theta$, there exist the adjustment coefficient $\g_\th>0$ and some $\e>0$ such that $\dis \int_1^\infty e^{(\g_\th + \e)z}\nu_\a(\df z) < \infty$.  
\end{flushleft}

\begin{flushleft}
{\bf [TD($k$)]}\ \ For each $x>0$, $\Pi_\cdot(x) \in C^k(\Xi)$. Moreover, there exists a constant $b_\th > \g_\th$ for each $\th\in \Theta$ such that, for $l=0,1,\dots,k$, 
\[
\l\|\nabla_\a^l\Pi_\a(x)\r\| \lesssim e^{-b_\th x},\quad b:=\inf_{\th\in \Theta}b_\th > 0. 
\]
\end{flushleft} 

\begin{flushleft}
{\bf [DI($k$)]}\ \ For each $u>0$, 
\[
\n_\th^k \int_0^u \psi_\th(u-z)g_\th(z)\,\df z = \int_0^u \n_\th^k \l[\psi_\th(u-z)g_\th(z)\r]\,\df z
\]
\end{flushleft}

\subsection{Asymptotic formulae for the derivatives}
Differentiate the renewal equation \eqref{dre-psi} formally in the both sides with respect to the parameter $\th$, we have that, for each $u\ge 0$, 
\begin{align}
\dot{\psi}_\th(u) = \dot{\psi}_\th \star g_\th(u) + \l[\dot{h}_\th(u) + \psi_\th\star \dot{g}_\th(u)\r]  \label{dre-1}
\end{align}
and 
\begin{align}
\ddot{\psi}_\th(u) = \ddot{\psi}_\th \star g_\th(u) + \l[\ddot{h}_\th(u) + 2\dot{\psi}_\th\star \dot{g}_\th(u) + \psi_\th\star \ddot{g}_\th(u)\r].   \label{dre-2}
\end{align}
%The exchangability of the differential and the integral sign is justified under the conditions specified later. 

Later we need the asymptotic formula for $\dot{\psi}_\th$ and $\ddot{\psi}_\th$ as $u\to \infty$. For that purpose, 
we introduce a version of {\it Key Renewal Theorem}. 

\begin{lemma}[Oshime and Shimizu \cite{os17}, Theorem 4]\label{lem:key}
Suppose that a function $Z$ meets the renewal equation
\[
Z(u) = Z*F(u) + G(u),\quad u\ge 0, 
\]
where $F$ is a probability distribution with mean $\mu_F$, 
and $G$ is a function with bounded variation and satisfies that $\sup_{u\in [0,1]}|G(u)|\lesssim 1$. 
If there exists 
\[
A := \lim_{u\to \infty}\frac{G(u)}{u^{k-1}} < \infty
\]
for some $k\in \N$, it holds that 
\[
Z(u) \sim \frac{A}{k \mu_F}u^k,\quad u\to \infty.
\]
\end{lemma}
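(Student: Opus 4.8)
The plan is to reduce the statement to the elementary renewal theorem by passing through the explicit solution of the renewal equation. First I would recall that, writing $U=\sum_{n\ge 0}F^{*n}$ for the renewal measure of $F$ (note $F\neq\d_0$ since $\mu_F>0$, so $U$ is locally finite and $F^{*n}([0,u])\to 0$ for each fixed $u$), the unique locally bounded solution of $Z=Z*F+G$ is $Z=U*G$. The hypotheses on $G$ — bounded variation together with $\sup_{u\in[0,1]}|G(u)|\lesssim 1$ — make $G$ Borel and locally bounded, so this representation is legitimate and $Z(u)=\int_{[0,u]}G(u-t)\,U(\df t)<\infty$ for every $u$. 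This is standard and of the same type already used for $\psi_\th$ in this paper.

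Next I would split $Z=Z_1+Z_2$ with $Z_1(u)=A\int_{[0,u]}(u-t)^{k-1}\,U(\df t)$ and $Z_2(u)=\int_{[0,u]}\bigl[G(u-t)-A(u-t)^{k-1}\bigr]\,U(\df t)$. For $Z_1$, rescale $t=us$ to get $Z_1(u)=A\,u^{k}\int_{[0,1]}(1-s)^{k-1}\,\tfrac1u U(u\,\df s)$. The elementary renewal theorem $U([0,x])\sim x/\mu_F$ (valid for any $F$ with finite mean, lattice or not) says precisely that $\tfrac1u U(u\cdot)$ converges weakly to $\mu_F^{-1}$ times Lebesgue measure on $[0,\infty)$; since $s\mapsto(1-s)^{k-1}$ is continuous and bounded on $[0,1]$, and the rescaled atoms of $U$ at $0$ and at $u$ are $o(1)$, Helly--Bray gives $Z_1(u)\sim A\,u^{k}\,\mu_F^{-1}\int_0^1(1-s)^{k-1}\,\df s=\tfrac{A}{k\mu_F}u^{k}$. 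For $Z_2$, fix $\ve>0$ and pick $M$ with $|G(v)-Av^{k-1}|\le\ve\,v^{k-1}$ for $v\ge M$, while $|G(v)-Av^{k-1}|\le C_M$ on $[0,M]$ by local boundedness; then $|Z_2(u)|\le\ve\,u^{k-1}U([0,u])+C_M\,U((u-M,u])\lesssim\ve\,u^{k}+O(1)$, where the last term uses that $U$ has uniformly bounded increments over intervals of fixed length. Letting $u\to\infty$ and then $\ve\downarrow 0$ gives $Z_2(u)=o(u^{k})$ (and the same bound gives $Z(u)=o(u^k)$ when $A=0$), so $Z(u)=Z_1(u)+Z_2(u)\sim\tfrac{A}{k\mu_F}u^{k}$.

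The main obstacle is making the weak-convergence step for $Z_1$ fully rigorous, in particular controlling the mass of $U$ near the endpoint $t=u$; but this is exactly where the weight $(u-t)^{k-1}$ — which vanishes at $t=u$ for $k\ge 2$ and equals $1$ for $k=1$ — renders the boundary harmless, so that only the elementary renewal theorem and the standard bound on renewal increments are needed. As an alternative, one could take Laplace transforms: from $1-\int_0^\infty e^{-st}F(\df t)\sim\mu_F s$ and the Abelian estimate $\L G(s)\sim A\,\Gamma(k)\,s^{-k}$ as $s\downarrow 0$ one gets $\L Z(s)\sim A\,(k-1)!\,\mu_F^{-1}s^{-(k+1)}$, and the Karamata Tauberian theorem yields the asymptotics of $\int_0^u Z$, upgraded to $Z(u)$ itself by using the bounded-variation hypothesis on $G$ as the Tauberian regularity condition; I would present the direct argument above as the primary proof.
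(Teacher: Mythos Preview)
The paper does not contain a proof of this lemma: it is quoted verbatim as Theorem~4 of Oshime and Shimizu~\cite{os17} and invoked without argument (the proof of Corollary~\ref{cor:psi-deriv} also refers back to ``the same argument as the proof of Theorem~4 in~\cite{os17}''). So there is nothing in the present paper to compare your attempt against.

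That said, your argument is sound. Writing $Z=U\ast G$ with $U=\sum_{n\ge 0}F^{\ast n}$, splitting off the exact power $A(u-t)^{k-1}$, and handling the main term via the elementary renewal theorem (in the scaled form $u^{-1}U(u\cdot)\Rightarrow \mu_F^{-1}\,\df s$) together with the remainder via an $\ve$--$M$ cut and the uniform bound on renewal increments, is a clean and standard route. The only point worth flagging is terminological: the hypothesis ``$G$ of bounded variation'' must be read as \emph{locally} bounded variation, since in the applications with $k=2$ the driving term grows linearly and cannot be globally BV; your proof uses only local boundedness of $G$, so this reading is exactly what you need. Your alternative Tauberian sketch is also viable but, as you note, requires an extra regularity step to pass from $\int_0^u Z$ to $Z(u)$; the direct argument avoids that.
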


According to \eqref{g-tilde}, we can easily transform \eqref{dre-1} and \eqref{dre-2} into the proper renewal equation by multiplying the term $e^{\g_\th u}$. 
The straightforward application of Lemma \ref{lem:key} with $k=1$ or $2$ immediately yields the following theorem. 

\begin{thm}\label{thm:psi-deriv}
Under the conditions [NPC], [MO], [TD(1)] and [DI(1)], it holds that 
\begin{align}
\dot{\psi}_\th(u) \sim C_\th \l[\mu_\th^{-1}\L \dot{g}_\th(-\g_\th)\r] u e^{-\g_\th u},\quad u\to \infty. \label{psi-1}
\end{align}
where $\mu_\th = \int_0^\infty x \wt{g}_\th(x)\,\df x = -(\L g_\th)'(-\g_\th)$ and $C_\th$ is the constant given in Theorem \ref{thm:cl-approx}. 
In particular, $\dot{g}_\th = c^{-1}\big(k_D\star \n_\a\Pi_\a, \n_D k_D\star \Pi_\a\big)^\top$. 

In addition that, suppose further the conditions [TD(2)] and [DI(2)], it is true that 
\begin{align}
\ddot{\psi}_\th(u) \sim C_\th \l[\mu_\th^{-1}\L \dot{g}_\th(-\g_\th)\r]^{\otimes 2} u^2 e^{-\g_\th u},\quad u\to \infty. \label{psi-2}
\end{align}
\end{thm}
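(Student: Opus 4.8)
The plan is to derive both asymptotic formulae from Lemma \ref{lem:key} after converting the differentiated renewal equations \eqref{dre-1} and \eqref{dre-2} into proper renewal equations. First I would multiply \eqref{dre-1} by $e^{\g_\th u}$ to obtain
\[
\wt{\dot\psi}_\th(u) = \wt{\dot\psi}_\th \star \wt{g}_\th(u) + \wt{G}_\th^{(1)}(u), \qquad \wt{G}_\th^{(1)}(u) = e^{\g_\th u}\bigl[\dot{h}_\th(u) + \psi_\th\star \dot{g}_\th(u)\bigr],
\]
using \eqref{g-tilde} to see that $\wt{g}_\th$ is a probability density with mean $\mu_\th$. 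The interchange of differentiation and convolution that justifies \eqref{dre-1} is precisely what [DI(1)] provides, and [TD(1)] together with [MO] guarantees $\dot{g}_\th$, $\dot{h}_\th$ decay fast enough (faster than $e^{-\g_\th u}$) that all the Laplace transforms below at argument $-\g_\th$ are finite. To apply Lemma \ref{lem:key} with $k=1$ I need $A_1 := \lim_{u\to\infty} \wt{G}_\th^{(1)}(u)$ to exist and be finite. The term $e^{\g_\th u}\dot{h}_\th(u)$ tends to $0$ because $\dot{h}_\th$ inherits the $e^{-b_\th u}$ decay with $b_\th > \g_\th$ from [TD(1)]. The main contribution is $e^{\g_\th u}\psi_\th\star\dot{g}_\th(u)$; here I would use the Cramér approximation $\psi_\th(u)\sim C_\th e^{-\g_\th u}$ from Theorem \ref{thm:cl-approx}, writing $e^{\g_\th u}\psi_\th\star\dot g_\th(u) = \int_0^u e^{\g_\th(u-z)}\psi_\th(u-z)\cdot e^{\g_\th z}\dot g_\th(z)\,\df z$, and pass to the limit (dominated convergence, using the uniform boundedness of $e^{\g_\th\cdot}\psi_\th$ and integrability of $e^{\g_\th\cdot}\dot g_\th$) to get $A_1 = C_\th \int_0^\infty e^{\g_\th z}\dot g_\th(z)\,\df z = C_\th\, \L\dot g_\th(-\g_\th)$. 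Lemma \ref{lem:key} then yields $\wt{\dot\psi}_\th(u) \sim (A_1/\mu_\th)\,u$, i.e. \eqref{psi-1}, and the stated form of $\dot g_\th$ follows by differentiating $g_\th = c^{-1}k_D\star\Pi_\a$ componentwise in $\a$ and in $D$ (only $k_D$ depends on $D$).

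For \eqref{psi-2} I would argue analogously starting from \eqref{dre-2}, now legitimate under [DI(2)], multiplying by $e^{\g_\th u}$ to get
\[
\wt{\ddot\psi}_\th(u) = \wt{\ddot\psi}_\th \star \wt{g}_\th(u) + \wt{G}_\th^{(2)}(u), \qquad \wt{G}_\th^{(2)}(u) = e^{\g_\th u}\bigl[\ddot h_\th(u) + 2\dot\psi_\th\star\dot g_\th(u) + \psi_\th\star\ddot g_\th(u)\bigr].
\]
The point is that $\wt{G}_\th^{(2)}(u)$ grows linearly in $u$, not converges: $e^{\g_\th u}\ddot h_\th(u)\to 0$ and $e^{\g_\th u}\psi_\th\star\ddot g_\th(u)\to$ a finite constant (both bounded, hence negligible after dividing by $u$), while $e^{\g_\th u}\dot\psi_\th\star\dot g_\th(u)$ is the dominant term. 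Using \eqref{psi-1} in the form $e^{\g_\th u}\dot\psi_\th(u)\sim C_\th[\mu_\th^{-1}\L\dot g_\th(-\g_\th)]u$ and writing $e^{\g_\th u}\dot\psi_\th\star\dot g_\th(u) = \int_0^u e^{\g_\th(u-z)}\dot\psi_\th(u-z)\cdot e^{\g_\th z}\dot g_\th(z)\,\df z$, I would divide by $u$ and pass to the limit: since $e^{\g_\th(u-z)}\dot\psi_\th(u-z)/u \to C_\th\mu_\th^{-1}\L\dot g_\th(-\g_\th)$ for each fixed $z$ and is dominated (up to constants) by $(1 + e^{\g_\th z}\|\dot g_\th(z)\|)$-type integrable bounds, dominated convergence gives
\[
\lim_{u\to\infty}\frac{\wt{G}_\th^{(2)}(u)}{u} = 2\,C_\th\mu_\th^{-1}\bigl[\L\dot g_\th(-\g_\th)\bigr]\otimes \L\dot g_\th(-\g_\th) =: A_2,
\]
the factor $2$ cancelling against the $k=2$ in Lemma \ref{lem:key}: $\wt{\ddot\psi}_\th(u)\sim (A_2/(2\mu_\th))u^2 = C_\th[\mu_\th^{-1}\L\dot g_\th(-\g_\th)]^{\otimes 2} u^2$, which is \eqref{psi-2}.

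I would check the bookkeeping that Lemma \ref{lem:key} demands of the inhomogeneous terms: $\wt{G}_\th^{(1)}$ and $\wt{G}_\th^{(2)}$ must have locally bounded variation and be bounded on $[0,1]$, which follows since $\psi_\th, k_D, \Pi_\a, \ol\nu_{I,\a}$ and their $\th$-derivatives are smooth and $e^{\g_\th u}$ is locally bounded; and $\mu_\th = \int_0^\infty x\wt g_\th(x)\,\df x < \infty$, finite under [MO] (this is essentially the moment computation already carried out in the proof of Theorem \ref{thm:cl-approx}, and the identity $\mu_\th = -(\L g_\th)'(-\g_\th)$ is a one-line Laplace-transform computation). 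The main obstacle is the uniform-integrability/domination step in the limit of the convolutions $e^{\g_\th u}\psi_\th\star\dot g_\th(u)$ and $e^{\g_\th u}\dot\psi_\th\star\dot g_\th(u)$: one must control the convolution tail uniformly in $u$, which requires combining the sharp Cramér asymptotics (and \eqref{psi-1}) with the exponential decay $e^{-b_\th u}$, $b_\th>\g_\th$, supplied by [TD($k$)] to produce an integrable dominating function independent of $u$ — everything else is routine differentiation under the integral sign and Fubini.
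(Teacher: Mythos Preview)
Your proposal is correct and follows essentially the same route as the paper's proof: multiply \eqref{dre-1} and \eqref{dre-2} by $e^{\g_\th u}$, identify the limits of the inhomogeneous terms via dominated convergence and the Cram\'er approximation (respectively \eqref{psi-1}), and apply Lemma \ref{lem:key} with $k=1$ and $k=2$. One small imprecision: the vanishing of $e^{\g_\th u}\dot h_\th(u)$ does not come solely from the $e^{-b_\th u}$ decay in [TD(1)]; the piece $\n_D e^{-(c/D)u}$ of $\dot h_\th$ requires instead the inequality $\g_\th < c/D$ from \eqref{gcd}, which the paper invokes explicitly.
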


\begin{proof}
In the beginning, we note that the equation \eqref{dre-1} is valid since we can differentiate the both sides under the integral sign by [DI(1)]. 
The differentiability of $h_\th$ is also confirmed from [TD(1)]. 
Then, multiplying $e^{\g_\th u}$ in the both sides of \eqref{dre-1}, we have that 
\begin{align}
\wt{\dot{\psi}}_\th(u) = \wt{\dot{\psi}}_\th \star \wt{g}_\th(u) + \l[\wt{\dot{h}}_\th(u) + \wt{\psi}_\th\star \wt{\dot{g}}_\th(u)\r], \label{RE-dot-psi}
\end{align}
which is the proper renewal equation due to \eqref{g-tilde}. Applying Lemma \ref{lem:key} with $k=1$ to this renewal equation, it suffices to show that 
\begin{align}
\lim_{u\to \infty}  \wt{\dot{h}}_\th(u) &= 0; \qquad 
\lim_{u\to \infty}  \wt{\psi}_\th\star \wt{\dot{g}}_\th(u) =  C_\th \L \dot{g}_\th(-\g_\th),  \label{property:RE-dot-psi}
\end{align}
for the proof of \eqref{psi-1}. Note that 
\begin{align*}
\wt{\dot{h}}_\th(u) &= e^{\g_\th u} \n_\th\l[\frac{1}{c}\int_0^u k_D(u-x)\ol{\nu}_{I,\a}(x)\,\df x + e^{-(c/D)u} \r] \\
&= e^{\g_\th u} \frac{1}{c}\int_0^u \begin{pmatrix} k_D(u-x) \n_\a \ol{\nu}_{I,\a}(x) \\ \n_D k_D(u-x)\ol{\nu}_{I,\a}(x)\end{pmatrix}\,\df x 
+ \begin{pmatrix}0 \\ \frac{c}{D^2} e^{(\g_\th -c/D)u} \end{pmatrix}  \\
&= \int_0^u \begin{pmatrix} \frac{1}{D}e^{(\g_\th - c/D)(u-x)} e^{\g_\th x}\n_\a \ol{\nu}_{I,\a}(x) \\ \frac{1}{D^2}\wt{\ol{\nu}}_{I,\a}(u-x) (x-1)e^{(\g_\th - c/D)x}\end{pmatrix}\,\df x 
+ \begin{pmatrix}0 \\ \frac{c}{D^2} e^{(\g_\th -c/D)u} \end{pmatrix} \\
&\lesssim \int_0^u \begin{pmatrix} e^{(\g_\th - c/D)(u-x)} e^{(\g_\th-b_\th) x} \\ e^{(\g_\th-b_\th)(x-u)} (x-1)e^{(\g_\th - c/D)x}\end{pmatrix}\,\df x 
+ \begin{pmatrix}0 \\ e^{(\g_\th -c/D)u} \end{pmatrix}. 
\end{align*}
We also note that the exchangeability of differential and the integral sign in the second equality is ensured under [TD(1)] since 
\begin{align*}
 \sup_{\a\in \Xi}\l|k_D(u-x) \n_\a \ol{\nu}_{I,\a}(x)\r| &\lesssim k_D(u-x)e^{-b x}, \\
\sup_{D\in \Lambda}\l|\n_D k_D(u-x) \ol{\nu}_{I,\a}(x)\r|&\lesssim (x+1)e^{-\frac{c}{D^*}(x-u)}e^{-bx}, 
\end{align*}
where $D^*:=\max \Lambda$. Since 
\begin{align}
\g_\th -\frac{c}{D} < 0,\quad b_\th > \g_\th \label{cond:g-b} 
\end{align}
from \eqref{gcd} and the assumption, we see the last term goes to zero by the Lebesgue convergence theorem. 
Moreover, 
\begin{align*}
 \wt{\psi}_\th\star \wt{\dot{g}}_\th(u) &= \int_0^\infty e^{\g_\th (u-x)}\psi_\th(u-x)\I_{\{x\le u\}}\cdot e^{\g_\th x}\dot{g}_\th(x)\,\df x. 
\end{align*}
By Theorem \ref{thm:cl-approx} and the fact that 
\[
\l\|e^{\g_\th x} \dot{g}_\th(x)\r\| \lesssim  e^{(\g_\th - c/D - b_\th)x}, 
\]
we have that 
\[
\l\|e^{\g_\th (u-x)}\psi_\th(u-x)\I_{\{x\le u\}}\cdot e^{\g_\th x}\dot{g}_\th(x)\r\| \lesssim e^{(\g_\th - c/D - b_\th)x}, 
\]
which is integrable by \eqref{cond:g-b}, and is independent of $u$. Then the Lebesgue convergence theorem yields that 
\begin{align*}
\lim_{u\to \infty} \wt{\psi}_\th\star \wt{\dot{g}}_\th(u) 
= \int_0^\infty C_\th \wt{\dot{g}}_\th(x)\,\df x  = C_\th \L \dot{g}_\th(-\g_\th). 
\end{align*}
This ends the proof of \eqref{psi-1}. 

We shall take the similar argument to show \eqref{psi-2}, that is, we start off the proper renewal equation \eqref{psi-2}: 
\begin{align*}
\wt{\ddot{\psi}}_\th(u) = \wt{\ddot{\psi}}_\th \star \wt{g}_\th(u) + \l[\wt{\ddot{h}}_\th(u) + 2\wt{\dot{\psi}}_\th\star \wt{\dot{g}}_\th(u) + \wt{\psi}_\th\star \wt{\ddot{g}}_\th(u)\r], 
\end{align*}
which is valid under [TD(2)] and [DI(2)]. By the same argument as above, it is easy to see the following: 
\begin{align*}
\lim_{u\to \infty}  u^{-1} \l[\wt{\ddot{h}}_\th(u) + \wt{\psi}_\th\star \wt{\ddot{g}}_\th(u)\r]= 0. 
\end{align*}
Moreover, using \eqref{psi-1}, we have that 
\begin{align*}
\lim_{u\to \infty}u^{-1} \wt{\dot{\psi}}_\th\star \wt{\dot{g}}_\th(u) 
&= \int_0^\infty \l[\lim_{u\to \infty}  u^{-1}e^{\g_\th (u-x)} \dot{\psi}_\th(u-x)\r]\, e^{\g_\th x}\dot{g}_\th(x)\,\df x \\
&= \frac{C_\th}{\mu_\th} \l[\L \dot{g}_\th(-\g_\th)\r]^{\otimes 2}. 
\end{align*}
As a result, Lemma \ref{lem:key} with $k=2$ yields that 
\[
\lim_{u\to \infty} u^{-2}\wt{\ddot{\psi}}_\th(u) = C_\th \l[\mu_\th^{-1}\L \dot{g}_\th(-\g_\th)\r]^{\otimes 2}. 
\]
This completes the proof of \eqref{psi-2}. 
\end{proof}

\begin{remark}
Although we do not give the detailed proof, we can show by the induction that it is also true for any $k\in \N$ that 
\[
\n_\th^k \psi_\th(u) \sim D_k(\th) u^k e^{-\g_\th u},\quad u\to \infty,
\]
where $D_k(\th) := C_\th \l[\mu_\th^{-1}\L \dot{g}_\th(-\g_\th)\r]^{\otimes k}$, under [NPC], [MO], [TD($k$)] and [DI($k$)]. 
\end{remark}

Later we need a uniform estimates of $\ddot{\psi}_\th(u)$ as $u\to \infty$. 
\begin{cor}\label{cor:psi-deriv}
\[
\limsup_{u\to \infty}\sup_{\th\in \Theta} \l|u^{-2}e^{\g_\th u}\ddot{\psi}_\th(u)\r| < \infty. 
\]
\end{cor}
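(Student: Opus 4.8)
The plan is to upgrade the pointwise limit in Theorem~\ref{thm:psi-deriv} to a uniform-in-$\th$ statement by re-examining the proof of \eqref{psi-2} and checking that every constant appearing there can be bounded uniformly over the compact set $\Theta$. First I would recall that Theorem~\ref{thm:psi-deriv} already gives, for each fixed $\th$, $u^{-2}e^{\g_\th u}\ddot{\psi}_\th(u)\to C_\th[\mu_\th^{-1}\L\dot g_\th(-\g_\th)]^{\otimes 2}$; since $\Theta$ is compact, it suffices to show (i) that the limiting quantity $\th\mapsto C_\th\|\mu_\th^{-1}\L\dot g_\th(-\g_\th)\|^2$ is bounded on $\Theta$, and (ii) that the convergence is uniform, i.e.\ $\sup_{\th}|u^{-2}e^{\g_\th u}\ddot\psi_\th(u) - D_2(\th)|\to 0$. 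For (i) one uses that $\g_\th$ is bounded away from $0$ and from $c/D$ uniformly (from \eqref{gcd} together with compactness and continuity of $\th\mapsto\g_\th$, which follows from the implicit function theorem applied to $\k_\th(\g_\th)=0$ under [MO]), that $b=\inf_\th b_\th>0$ by [TD(2)], and that $c-m_\a$ and the integral $\int_0^\infty ze^{\g_\th z}\nu_\a(\df z)$ in the denominator of $C_\th$ are continuous and the latter stays positive (again [MO] gives the uniform moment domination $\int_1^\infty e^{(\g_\th+\e)z}\nu_\a(\df z)<\infty$).

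The core of the argument is (ii), and the natural route is to make the two Lebesgue-convergence steps in the proof of \eqref{psi-2} uniform. Recall the proper renewal equation $\wt{\ddot\psi}_\th = \wt{\ddot\psi}_\th\star\wt g_\th + [\wt{\ddot h}_\th + 2\wt{\dot\psi}_\th\star\wt{\dot g}_\th + \wt\psi_\th\star\wt{\ddot g}_\th]$. The forcing term, after dividing by $u$, tends to $2\,C_\th\mu_\th^{-1}[\L\dot g_\th(-\g_\th)]^{\otimes2}$, and the domination bounds displayed in that proof — namely $\|e^{\g_\th x}\dot g_\th(x)\|\lesssim e^{(\g_\th-c/D-b_\th)x}$ and $\|e^{\g_\th(u-x)}\psi_\th(u-x)\I_{\{x\le u\}}\|\lesssim 1$, as well as the analogue with $u^{-1}\dot\psi_\th$ in place of $\psi_\th$ using \eqref{psi-1} — all hold with constants that, by [NPC], [MO], [TD(2)] and compactness of $\Theta$, can be taken independent of $\th$; crucially $\g_\th-c/D-b_\th \le \g_\th-c/D-b < -b <0$ uniformly, so the dominating exponential is a single $\th$-free integrable function. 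Thus the convergence of the forcing term is uniform in $\th$. One then needs a version of Lemma~\ref{lem:key} that is uniform over a family of renewal equations whose data $(F,G)$ depend on a parameter; the cleanest way is to check that the proof of Theorem~4 in \cite{os17} only uses the renewal density bound and the rate of convergence of $G$, both of which we have uniformly, or alternatively to invoke a uniform key renewal theorem directly. Concretely, writing $Z_\th(u)=\wt{\ddot\psi}_\th(u)$ and $G_\th$ for the forcing term, the renewal representation $Z_\th = G_\th + G_\th\star U_\th$ (with $U_\th$ the renewal measure of $\wt g_\th$) combined with a uniform bound on the tail $U_\th([u,\infty))\lesssim$ (something $\th$-free, using that $\wt g_\th$ has a uniformly bounded mean $\mu_\th$ and suitable uniform spread-out/regularity from [TD(2)]) lets one conclude $\sup_\th|u^{-2}Z_\th(u)-D_2(\th)|\to0$.

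The step I expect to be the main obstacle is precisely this uniform key renewal theorem: the pointwise Lemma~\ref{lem:key} is quoted as a black box, and promoting it to hold uniformly over $\th\in\Theta$ requires either re-deriving its proof with explicit control of the renewal measure $U_\th$ (e.g.\ via a uniform version of Blackwell's theorem or via Laplace-transform/Tauberian estimates that are uniform in the parameter) or arguing by a compactness-plus-continuity dodge — showing $\th\mapsto u^{-2}e^{\g_\th u}\ddot\psi_\th(u)$ is, for each $u$, continuous on $\Theta$, and that the family $\{u\mapsto u^{-2}e^{\g_\th u}\ddot\psi_\th(u)\}_{\th}$ is equicontinuous/uniformly convergent, so that the finite limit combined with the $\limsup$ over $u$ stays bounded. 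In the write-up I would most likely take the route of remarking that all estimates in the proof of Theorem~\ref{thm:psi-deriv} are already uniform under the standing assumptions (which is essentially true as written), so that the convergence in \eqref{psi-2} is locally uniform in $\th$, and then the result follows because $\Theta$ is compact and the limit function $D_2(\th)$ is continuous hence bounded. The only genuinely new ingredient to spell out is the uniform integrable domination, which reduces to the single inequality $\sup_{\th\in\Theta}(c/D+b_\th-\g_\th)^{-1}<\infty$ together with $\inf_\th(c/D-\g_\th)>0$, both immediate from [NPC], \eqref{gcd}, [TD(2)] and compactness.
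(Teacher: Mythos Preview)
Your approach is essentially the paper's, and you have correctly identified the crux: one needs a uniform-in-$\th$ version of Lemma~\ref{lem:key}. The paper, however, does not go through uniform \emph{convergence} of $u^{-2}e^{\g_\th u}\ddot\psi_\th(u)$ to $D_2(\th)$; it formulates the uniform lemma directly as a $\limsup$ statement: if $\inf_\th\int_0^\infty x\,F_\th(\df x)<\infty$, $\sup_\th|G_\th(u)|\lesssim 1$, and $\limsup_{u}\sup_\th|G_\th(u)/u^{k-1}|<\infty$, then $\limsup_u\sup_\th|Z_\th(u)/u^k|<\infty$, which (as the paper notes) follows by rerunning the proof of Theorem~4 in \cite{os17} with the uniform hypotheses. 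This sidesteps the equicontinuity/uniform-Blackwell issues you flagged. The paper then applies this lemma first with $k=1$ to the renewal equation for $\wt{\dot\psi}_\th$, obtaining $\limsup_u\sup_\th|u^{-1}e^{\g_\th u}\dot\psi_\th(u)|<\infty$; this bound is exactly what is needed to verify the uniform hypothesis on the forcing term $2\wt{\dot\psi}_\th\star\wt{\dot g}_\th$ in the $\wt{\ddot\psi}_\th$ equation, after which the lemma with $k=2$ gives the corollary. Your route via uniform convergence would also work but is more than is required; the paper's two-step bootstrap ($\dot\psi$ first, then $\ddot\psi$) with a boundedness-only lemma is shorter. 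Incidentally, the paper's written proof stops after the $k=1$ step for $\dot\psi_\th$ and leaves the second application implicit.
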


\begin{proof}
For this proof, we need to consider a modification of Lemma \ref{lem:key} so that the solution $Z$ to the renewal equation depends on $\th\in \Theta$: 
\[
Z_\th(u) = Z_\th*F_\th(u) + G_\th(u),\quad u\ge 0, 
\]
and assume that the distribution $F_\th$ and the function $G_\th$ satisfy the following conditions: 
\[
\inf_{\th\in \Theta}\l|\int_0^\infty x\,F_\th(\df x)\r| < \infty;\quad \sup_{\th\in \Theta}|G_\th(u)|\lesssim 1, 
\]
and that, for some $k\in \N$, 
\[
\limsup_{u\to \infty}\sup_{\th\in \Theta}\l|\frac{G_\th(u)}{u^{k-1}}\r| < \infty. 
\]
Then it is easy to see by the same argument as the proof of Theorem 4 in Oshime and Shimizu \cite{os17} that 
\[
\limsup_{u\to \infty}\sup_{\th\in \Theta}\l|\frac{Z_\th(u)}{u^k}\r| < \infty. 
\]
Applying this with $k=1$ to the renewal equation \eqref{RE-dot-psi} given in the proof of Theorem \ref{thm:psi-deriv}: 
\[
\wt{\dot{\psi}}_\th(u) = \wt{\dot{\psi}}_\th \star \wt{g}_\th(u) + \l[\wt{\dot{h}}_\th(u) + \wt{\psi}_\th\star \wt{\dot{g}}_\th(u)\r], 
\]
we see that 
\begin{align*}
\inf_{\th\in \Theta}\l|\int_0^\infty x\wt{g}_\th(x)\,\df x\r| < \infty, \quad 
\sup_{\th\in \Theta}\l|\wt{\dot{h}}_\th(u) + \wt{\psi}_\th\star \wt{\dot{g}}_\th(u)\r| \lesssim 1, 
\end{align*}
according to the fact \eqref{property:RE-dot-psi} and the compactness of $\Theta$. Hence we have that 
\[
\limsup_{u\to \infty}\sup_{\th\in \Theta}\l|u^{-1}e^{\g_\th u}\dot{\psi}_\th(u) \r| < \infty. 
\]
\end{proof}

\section{Surplus model and its statistical inference}

\subsection{L\'evy surplus and observations}\label{sec:levy surplus}

In this paper, we consider the L\'evy surplus $R$ given by \eqref{model}: 
\begin{align*}
R_t = u + ct + \s_0 W_t - S_t,\quad t\ge 0, 
\end{align*}
where $S$ is a L\'evy process with the L\'evy measure $\nu_{\a_0}$. Note that $\th_0=(\a_0,D_0=\s_0^2/2)$ is the true value of the parameter. 

Since the process generally has infinitely many jumps in any finite time interval, it seems unrealistic to use a L\'evy process to the surplus model. 
However several view points are possible to consider such a L\'evy model in  insurance context: 
(I) Infinitely many ``small" jumps are interpreted as an approximation to frequently occurred small claims and other extra fees in insurance business, among others; 
(II) The jump process $S$ does not necessarily imply the aggregate claims but a ``macro approximation" of the whole path of surplus; see Shimizu \cite{s09}. 
The former (I) may be commonly recognized aspect in this context. The latter aspect (II) is similar to the one in financial context, where the stock price 
is modeled by the jump process with discrete observation although we do not know whether the jump really exists or does not. 
Anyway, the L\'evy surplus model can be better approximation than the classical one for the real surplus, and such a L\'evy model is becoming popular in recent academism of ruin theory. 

Considering the statistical inference for such a L\'evy surplus, there are several possibilities for available data set.  
\begin{itemize}
\item The value of the surplus at discrete time points on $[0,T]$: 
\[
R^n:=\{R_{ih}\,|\,i=0,1,2,\dots,n,\ h>0\},\quad T:=nh
\]
\item Claims larger than a certain level $\e_T\ge 0$, say $J_S(\e_T,\infty)$, where 
\[
J_S(a,b)=\{\D S_t\,|\,a < \D S_t\le b,\ t\in [0,T]\}. 
\]
\end{itemize}
If we adopt the aspect (I) then we may assume that $R^n\cup J_S(\e_T,\infty)$ is available, and consider the asymptotics such that 
$T\to \infty$ as well as $\e_T\to \e\ge 0$. In particular, if the subordinator is a compound Poisson process, then it would be reasonable to assume $\e_T\equiv 0$. 
 
If we do (II) then it would be reasonable to assume that only $R^n$ is available. 
In this case, estimation of $\th_0$ is possible under different sampling (asymptotic) schemes: 
\begin{itemize}
\item {\it Low-frequency}: $T\to \infty$ as $n\to \infty$; $h\equiv$ fixed. 
\item {\it High-frequency}: $T\equiv$ fixed.; $h\to 0$ as $n\to \infty$.
\item {\it High-frequency with long term}: $T\to \infty$ and $h\to 0$ as $n\to \infty$.
\end{itemize}
See Akritas and Johnson \cite{aj81} and Basawa and Brockwell \cite{bb78} for case (I), and see also Shimziu \cite{s11} in the context of ruin theory. 
Moreover, see Woerner \cite{w01} for case (II), which discusses the inference for discretely observed L\'evy processes under high-frequency setting. 

Later, we consider the case where $T\to \infty$, which is usually needed to estimate $\nu_\a$.

\subsection{Estimating the parameter $\th_0$}
In this paper, we do not discuss how to estimate the parameter $\th_0$ in general because it strongly depends on the model of $\nu_\a$. 
We just suppose that some consistent estimator $\wh{\th}_T\ \toP \th_0$ is given. 
Although the rate of convergence of the estimator can be different in general due to the sampling scheme given in the previous section and the properties of $\nu_{\a_0}$, 
since we are now dealing with the case where 
\[
\int_0^1 z \nu_{\a_0}(\df z) < \infty, 
\]
under which the variation due to jumps are finite, the optimal rate of convergence for estimation of $\a_0$ is 
$\sqrt{T}$ as $T\to \infty$ according to Woerner \cite{w01}, Section 4.2. 

On the other hand, the diffusion parameter $D_0$ is also estimable under both schemes (I) and (II). 
For case (I), we can find an estimator such that $\wh{D}_T =  D_0 + o_p(1/\sqrt{T})$ as $T\to \infty$ under a suitable high frequency conditions; 
see e.g., Jacod \cite{j07}, or Shimizu \cite{s11} in the context of ruin theory. 
For case (II), see e.g., Masuda \cite{ma13} for a quasi-likelihood approach, or Shimizu and Yoshida \cite{sy06} or Shimizu \cite{s06} for ``threshold estimation". 
For example, using the thresholding technique given in \cite{sy06} and \cite{s06}, we can find an estimator such that 
$\sqrt{n}(\wh{D}_n -  D_0)$ is asymptotically normal under $T\to \infty$ and $h\to 0$ as $n\to \infty$. Hence this estimator also satisfies that 
$\wh{D}_n =  D_0 + o_p(1/\sqrt{T})$. 
In any case, we should note that the high frequency assumption is needed to estimate the diffusion parameter $D_0$ separately from the parameters in the jump component. 

Henceforth, we suppose that estimators such that
\begin{align}
\sqrt{T}\begin{pmatrix} 
\wh{\a}_T - \a_0 \\
\wh{D}_T -  D_0
\end{pmatrix}
\toD \begin{pmatrix}{\cal N}_{p}(0,\Sigma_0) \\ 0\end{pmatrix}, \label{th-hat}
\end{align}
as $T\to \infty$ and $h\to 0$, is available, and that the asymptotic variance $\Sigma_0$ is also estimable by $\wh{\Sigma}_T$: 
\begin{align}
\wh{\Sigma}_T\toP \Sigma_0,\quad T\to \infty. 
\end{align}
Some concrete examples are discussed later. 

\subsection{Estimating the adjustment coefficient}

Once the parameters are estimated, we can also estimate the adjustment coefficient $\g_{\th_0}$. 
Neverthless, we can not substitute $\wh{\th}_T$ for $\th_0$ directly such as $\g_{\wh{\th}_T}$ because 
the map $\th\mapsto \g_\th$ is not an explicit function of $\th$, but implicit positive solution to the Lundberg euqation \eqref{l-eq}.

\begin{thm}\label{thm:adj}
Suppose the conditions [NPC], [MO], [TD(1)], and that there exists a statistic $\wh{\g}_T$ uniquely which satisfies the equation
\begin{align}
\k_{\wh{\th}_T}(\wh{\g}_T) = 0,  \label{l-eq-approx}
%-c \wh{\g}_T + \wh{D}_T \wh{\g}_T^2 + \int_0^\infty (e^{\wh{\g}_T z} - 1)\,\nu_{\wh{\a}_T} = 0.
\end{align}
where $\k_\th(r)$ is the function given in \eqref{l-eq}. 
Then $\wh{\g}_T = \g_{\wh{\th}_T}$, and it holds that 
\[
\sqrt{T}(\wh{\g}_T - \g_0) \toD {\cal N}_1\l(0, \frac{\n_\a \k_{\th_0}^\top(\g_0) \Sigma_0\n_\a \k_{\th_0}(\g_0)}{\l[c + 2D \g_0 - \int_0^\infty z e^{\g_0 z}\,\nu_{\a_0}(\df z)\r]^2}\r) ,\quad n\to \infty. 
\]
\end{thm}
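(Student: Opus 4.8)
The plan is to realise $\th\mapsto\g_\th$ as an implicitly defined $C^1$ function through the modified Lundberg equation \eqref{l-eq}, and then invoke the delta method on top of \eqref{th-hat}.

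First I would settle the identity $\wh{\g}_T=\g_{\wh{\th}_T}$. For every $\th\in\Theta$ the map $r\mapsto\k_\th(r)$ is strictly convex on $[0,\infty)$, since $\k''_\th(r)=2D+\int_0^\infty z^2e^{rz}\,\nu_\a(\df z)>0$, the integral being finite in a neighbourhood of $r=\g_\th$ by [MO]; together with $\k_\th(0)=0$ and $\k'_\th(0)=-c+m_\a<0$ (which is [NPC]), this forces $\k_\th$ to have at most one positive zero, namely $\g_\th$. Since $\wh{\g}_T>0$ solves $\k_{\wh{\th}_T}(\wh{\g}_T)=0$, it must equal $\g_{\wh{\th}_T}$.

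Next I would apply the implicit function theorem to $F(\th,r):=\k_\th(r)$ at $(\th_0,\g_0)$. To see that $F$ is $C^1$ on a neighbourhood of $(\th_0,\g_0)$ one differentiates under the integral sign in $\int_0^\infty(e^{rz}-1)\,\nu_\a(\df z)$: in $r$ this is legitimate near $r=\g_\th$ because, for $r\le\g_\th+\e/2$, the integrand $ze^{rz}$ is dominated by a constant multiple of $e^{(\g_\th+\e)z}$, which is $\nu_\a$-integrable on $[1,\infty)$ by [MO]; in $\a$ it is legitimate by [TD(1)] once the integral is rewritten, by integration by parts, as $\int_0^\infty re^{rz}\Pi_\a(z)\,\df z$, using $\|\n_\a\Pi_\a(z)\|\lesssim e^{-b_\th z}$ with $b_\th>\g_\th$; the uniform exponential bounds in [MO] and [TD(1)] further give continuity of the resulting partial derivatives in $(\th,r)$. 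Moreover $\p_r F(\th_0,\g_0)=\k'_{\th_0}(\g_0)=\int_0^\infty ze^{\g_0 z}\,\nu_{\a_0}(\df z)-c+2D_0\g_0>0$ by the strict convexity of $\k_{\th_0}$ together with $\k_{\th_0}(0)=\k_{\th_0}(\g_0)=0$. The implicit function theorem then yields a $C^1$ branch $r=\g_\th$ on a neighbourhood of $\th_0$, agreeing with the global $\g_\th$ by local uniqueness of the positive root, with
\[
\n_\th\g_{\th_0}=-\frac{\n_\th\k_{\th_0}(\g_0)}{\k'_{\th_0}(\g_0)},\qquad
\n_\th\k_{\th_0}(\g_0)=\begin{pmatrix}\n_\a\int_0^\infty(e^{\g_0 z}-1)\,\nu_{\a_0}(\df z)\\ \g_0^2\end{pmatrix}.
\]

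Finally, since $\wh{\th}_T\toP\th_0$ and $\g$ is $C^1$ near $\th_0$, the delta method applied to \eqref{th-hat} gives $\sqrt{T}(\wh{\g}_T-\g_0)=\sqrt{T}(\g_{\wh{\th}_T}-\g_0)\toD\n_\th\g_{\th_0}^\top Z$, where $Z=(\mathcal{N}_p(0,\Sigma_0),0)^\top$ is the limit in \eqref{th-hat}. Because the last component of $Z$ vanishes, only the $\a$-block of $\n_\th\g_{\th_0}$ contributes, so the limit law is $-\k'_{\th_0}(\g_0)^{-1}\,\n_\a\k_{\th_0}^\top(\g_0)\,\mathcal{N}_p(0,\Sigma_0)$, i.e.\ centred normal with variance $\k'_{\th_0}(\g_0)^{-2}\,\n_\a\k_{\th_0}^\top(\g_0)\,\Sigma_0\,\n_\a\k_{\th_0}(\g_0)$; substituting the value of $\k'_{\th_0}(\g_0)$ computed above yields the asymptotic variance in the statement. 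The step I expect to be the main obstacle is the rigorous justification of interchanging $\n_\a$, $\p_r$ and $\int\cdot\,\nu_\a(\df z)$ near $r=\g_\th$ — where $z\mapsto ze^{rz}$ is borderline $\nu_\a$-non-integrable, so that the slack $\e>0$ in [MO] and the strict inequality $b_\th>\g_\th$ in [TD(1)] are both essential — together with checking that these derivatives are continuous on a full neighbourhood of $(\th_0,\g_0)$, as the $C^1$ hypothesis of the implicit function theorem (hence of the delta method) requires; everything else is routine.
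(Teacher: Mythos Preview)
Your proof is correct and takes a somewhat different route from the paper. The paper proceeds in classical $Z$-estimator fashion: it subtracts the two estimating equations $\k_{\th_0}(\g_0)=0$ and $\k_{\wh{\th}_T}(\wh{\g}_T)=0$, rewrites the L\'evy integral as $\g\int_0^\infty e^{\g x}\Pi_\a(x)\,\df x$ via Fubini, and then applies the mean value theorem separately in the $\g$- and $\a$-directions to extract a linear relation for $\sqrt{T}(\wh{\g}_T-\g_0)$, finishing with Slutsky's lemma (consistency is quoted from van der Vaart's $Z$-estimator lemma and a sub-subsequence argument). You instead package the smooth dependence $\th\mapsto\g_\th$ once and for all via the implicit function theorem and then invoke the delta method in one stroke. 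The underlying linearisation is the same, but your framing separates the analytic verification (the $C^1$ regularity of $(\th,r)\mapsto\k_\th(r)$ near $(\th_0,\g_0)$, which you correctly identify as the crux and justify through [MO] and [TD(1)]) from the probabilistic step, whereas the paper interleaves them. The paper's hands-on approach has the minor advantage of not formally requiring $C^1$ regularity on a full neighbourhood---pointwise differentiability plus continuity of the ingredients suffices for the mean-value/Slutsky argument---but in this setting both come from the same exponential dominations, so nothing is really gained either way.
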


\begin{proof}
In the beginning, we note that $\g_0$ exists uniquely under the conditions [NPC] and [MO], and that $\g_0$ and $\wh{\g}_n$ satisfies the following equalities: 
\begin{align}
&-c \g_0 + D_0 \g_0^2 + \int_0^\infty (e^{\g_0 z} - 1)\,\nu_{\a_0}(\df z) = 0;  \label{adj1} \\
&-c \wh{\g}_T + \wh{D}_T \wh{\g}_T^2 + \int_0^\infty (e^{\wh{\g}_T z} - 1)\,\nu_{\wh{\a}_T}(\df z) = 0.  \label{adj2}
\end{align}
The weak consistency $\wh{\g}_n\toP \g_0$ is obtained by the standard theory of $Z$-estimator, e.g., van der Vaart (1998), Lemma 5.1. 
Hence we only show the asymptotic normality. For that purpose, we may assume in the sequel that 
\begin{align}
\wh{\g}_T\to \g_0,\quad \wh{\a}_T\to \a_0\quad a.s., \label{suppose-a.s.}
\end{align}
by the standard {\it ``sub-sub sequence" argument}, which makes the discussion simple. 

Here we note that, under [MO] and [TD(0)], it follows by Fubini's theorem that 
\[
\int_0^\infty (e^{\g z}-1)\,\nu_{\a}(\df z) = \g\int_0^\infty e^{\g x}\Pi_{\a}(x)\,\df x
\]
for any $(\g,\a)$ that is around $(\g_0,\a_0)$. 
Therefore the above equality holds if $\g=\wh{\g}_T$ or $\a=\wh{\a}_T$ for $T$ large enough. 
We shall take the difference of \eqref{adj1} and \eqref{adj2} and the mean value theorem to obtain that 
\begin{align}
&\quad -c(\wh{\g}_T - \g_0) + \l[D_0\g_0^2 - \wh{D}_T\wh{\g}_T^2\r] \notag \\
&\qquad \quad+ \int_0^\infty (e^{\g_0z}-1)\,\nu_{\a_0}(\df z) - \int_0^\infty (e^{\wh{\g}_Tz}-1)\,\nu_{\wh{\a}_T}(\df z) = 0 \notag \\
\Leftrightarrow &\quad 
-c(\wh{\g}_T - \g_0) + \wh{D}_T\l[\wh{\g}_T^2 - \g_0^2\r] - \g_0^2\l[\wh{D}_T - D_0\r] \notag\\ 
&\qquad \quad - \int_0^\infty (e^{\wh{\g}_Tz}-1)\,\nu_{\wh{\a}_T}(\df z)  + \int_0^\infty (e^{\wh{\g}_Tz}-1)\,\nu_{\a_0}(\df z)\notag \\
&\qquad \quad - \int_0^\infty (e^{\wh{\g}_Tz}-1)\,\nu_{\a_0}(\df z)+ \int_0^\infty (e^{\g_0z}-1)\,\nu_{\a_0}(\df z) = 0 \notag \\
\Leftrightarrow &\quad 
-c(\wh{\g}_T - \g_0) + \wh{D}_T\l[\wh{\g}_T^2 - \g_0^2\r] - \g_0^2\l[\wh{D}_T - D_0\r] \notag\\
&\qquad \quad  + \wh{\g}_T\int_0^\infty e^{\wh{\g}_T x}\l[\Pi_{\wh{\a}_T}(x) - \Pi_{\a_0}(x)\r]\,\df x - \int_0^\infty (e^{\wh{\g}_Tz} - e^{\g_0 z})\,\nu_{\a_0}(\df z) = 0 \notag \\
\Leftrightarrow &\quad 
\sqrt{T}\l(\wh{\g}_T - \g_0\r)\l[c - 2D_0\g_0 - \int_0^\infty \g_0e^{\g_0z}\,\nu_{\a_0}(\df z) + o_p(1)\r] \notag \\
&\qquad \quad = \g_0\int_0^\infty e^{\g_0 x}\n_\a \Pi_{\a^*}(x)\,\df x\cdot \sqrt{T}\l(\wh{\a}_T - \a_0\r), 
\label{eq2}
\end{align}
where $\a^*$ is a random number between $\wh{\a}_T$ and $\a_0$. Note that $\a^*\to \a_0\ a.s.$ by the assumption \eqref{suppose-a.s.}, and that 
$\sqrt{T}\l(\wh{\a}_T - \a_0\r) \toD {\cal N}_p(0,\Sigma_0)$.
Moreover, note that 
\[
\int_0^\infty e^{\g_0 x}\n_\a \Pi_{\a^*}(x)\,\df x = \n_\a \int_0^\infty (e^{\g_0z} - 1)\,\nu_{\a_0}(\df z) = \n_\a\k_\th(\g_0)
\]
under the condition [TD(1)]. 
As a result, Slutsky's lemma ends the proof.  
\end{proof}

\subsection{Main result}

As described in Section \ref{sec:par-inf}, we can apply the delta method to the ruin probability $\psi_{\wh{\th}_T}(u)$ to obtain 
the asymptotic distribution

\begin{thm}\label{thm:main}
Suppose [NPC], [MO], [TD($2$)] and [DI($2$)]. Let $\{u_T\}_{T\ge 0}$ be a real sequence such that, as $T\to \infty$, 
\[
u_T\to \infty,\quad u_T/\sqrt{T}\to 0. 
\]
Moreover let 
\[
\s^*(\th,u):=\l[ \z(\th)^\top \Sigma_0^* \z(\th)\r]^{1/2}\cdot u\,e^{-\g_\th u} \in \R, 
\]
where  $\Sigma_0^* := \begin{pmatrix} \Sigma_0 & \mb{O}_{p,1} \\ \mb{O}_{1,p} & 0\end{pmatrix}$, which is $(p+1)\times (p+1)$-matirix, 
and  
\[
\z(\th) := C_\th \l[\mu_\th^{-1}\L \dot{g}_\th(-\g_\th)\r] \in \R^{p+1}
\]
Then it holds that 
\[
\frac{\sqrt{T}(\psi_{\wh{\th}_T}(u_T) - \psi_\th(u_T))}{\s^*(\wh{\th}_T,u_T)} \toD N(0,1), \quad T\to \infty. 
\]
\end{thm}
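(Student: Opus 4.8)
The plan is to combine a second-order Taylor expansion of $\th\mapsto\psi_\th(u_T)$ around $\th_0$ with the Cram\'er-type asymptotics \eqref{psi-1} and the uniform bound of Corollary~\ref{cor:psi-deriv}; throughout, $\psi_\th(u_T)$ in the numerator is read as $\psi_{\th_0}(u_T)$, and I assume the non-degeneracy $\z(\th_0)^\top\Sigma_0^*\z(\th_0)>0$ that is implicit in the statement. Since $\wh{\th}_T\toP\th_0\in int(\Theta)$, one works on events of probability tending to $1$ on which $\wh{\th}_T\in\Theta$; since $\th\mapsto\psi_\th(u)$ is $C^2$ in a neighbourhood of $\th_0$ by [TD(2)] and [DI(2)], Taylor's theorem gives, for some $\th^*_T$ on the segment from $\wh{\th}_T$ to $\th_0$,
\[
\psi_{\wh{\th}_T}(u_T)-\psi_{\th_0}(u_T)=\dot{\psi}_{\th_0}(u_T)^\top(\wh{\th}_T-\th_0)+\tfrac12(\wh{\th}_T-\th_0)^\top\ddot{\psi}_{\th^*_T}(u_T)(\wh{\th}_T-\th_0).
\]
Multiplying by $\sqrt T$ and dividing by $\s^*(\wh{\th}_T,u_T)=[\z(\wh{\th}_T)^\top\Sigma_0^*\z(\wh{\th}_T)]^{1/2}u_T e^{-\g_{\wh{\th}_T}u_T}$, it suffices to show that the normalized quadratic term is $o_p(1)$ and that the normalized linear term converges in law to $N(0,1)$.

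For the quadratic term, Corollary~\ref{cor:psi-deriv} furnishes a constant $K<\infty$ with $\|\ddot{\psi}_\th(u)\|\le K\,u^2 e^{-\g_\th u}$ for all $\th\in\Theta$ and all large $u$, whence
\[
\frac{\sqrt T\,\big|(\wh{\th}_T-\th_0)^\top\ddot{\psi}_{\th^*_T}(u_T)(\wh{\th}_T-\th_0)\big|}{2\,\s^*(\wh{\th}_T,u_T)}\le \frac{K}{2}\cdot\frac{\|\sqrt T(\wh{\th}_T-\th_0)\|^2}{\sqrt T}\cdot\frac{u_T\,e^{(\g_{\wh{\th}_T}-\g_{\th^*_T})u_T}}{[\z(\wh{\th}_T)^\top\Sigma_0^*\z(\wh{\th}_T)]^{1/2}}.
\]
Here $\|\sqrt T(\wh{\th}_T-\th_0)\|^2=O_p(1)$ by \eqref{th-hat}; the map $\th\mapsto\g_\th$ is locally Lipschitz near $\th_0$ by the implicit function theorem applied to $\k_\th(\g_\th)=0$, since $\k'_\th(\g_\th)=\int_0^\infty ze^{\g_\th z}\,\nu_\a(\df z)-c+2D\g_\th>0$ under [NPC] and [MO] (this is also the positive denominator of $C_\th$), so $|\g_{\wh{\th}_T}-\g_{\th^*_T}|\,u_T\lesssim\|\wh{\th}_T-\th_0\|\,u_T=O_p(u_T/\sqrt T)=o_p(1)$ and $e^{(\g_{\wh{\th}_T}-\g_{\th^*_T})u_T}\toP1$; and $\z(\cdot)$ is continuous near $\th_0$ (dominated convergence under [MO], [TD(1)]), so $[\z(\wh{\th}_T)^\top\Sigma_0^*\z(\wh{\th}_T)]^{-1/2}\toP[\z(\th_0)^\top\Sigma_0^*\z(\th_0)]^{-1/2}<\infty$. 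Since $u_T/\sqrt T\to0$, the whole bound is $o_p(1)$.

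For the linear term, write
\[
\frac{\sqrt T\,\dot{\psi}_{\th_0}(u_T)^\top(\wh{\th}_T-\th_0)}{\s^*(\wh{\th}_T,u_T)}=\Big(\frac{\dot{\psi}_{\th_0}(u_T)}{u_T\,e^{-\g_{\th_0}u_T}}\Big)^\top\sqrt T(\wh{\th}_T-\th_0)\cdot\frac{e^{(\g_{\wh{\th}_T}-\g_{\th_0})u_T}}{[\z(\wh{\th}_T)^\top\Sigma_0^*\z(\wh{\th}_T)]^{1/2}}.
\]
By \eqref{psi-1} (using $u_T\to\infty$) the first factor tends to the constant vector $\z(\th_0)^\top$; by \eqref{th-hat} the second converges in law to $Z\sim\mathcal N_{p+1}(0,\Sigma_0^*)$; and, exactly as in the previous paragraph, the third converges in probability to $[\z(\th_0)^\top\Sigma_0^*\z(\th_0)]^{-1/2}$ (again using $u_T/\sqrt T\to0$ and the continuity of $\z$). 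By Slutsky's lemma the product converges in law to $\z(\th_0)^\top Z/[\z(\th_0)^\top\Sigma_0^*\z(\th_0)]^{1/2}$, which is $N(0,1)$ because $\z(\th_0)^\top Z\sim\mathcal N(0,\z(\th_0)^\top\Sigma_0^*\z(\th_0))$. Together with the bound on the quadratic term, this proves the theorem.

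The main obstacle is the interaction, inside the exponentials $e^{-\g_{\wh{\th}_T}u_T}$ and $e^{-\g_{\th^*_T}u_T}$, between the $O_p(T^{-1/2})$ fluctuation of $\g_{\wh{\th}_T}$ and the divergence of $u_T$: the quantities $(\g_{\wh{\th}_T}-\g_{\th_0})u_T$ and $(\g_{\wh{\th}_T}-\g_{\th^*_T})u_T$ are precisely what force the hypothesis $u_T/\sqrt T\to0$, and handling them requires the Lipschitz regularity of $\th\mapsto\g_\th$ obtained from the modified Lundberg equation. A secondary point is that the Taylor remainder must be controlled via the \emph{uniform} estimate of Corollary~\ref{cor:psi-deriv}, since $\th^*_T$ is random and the pointwise asymptotics of Theorem~\ref{thm:psi-deriv} alone would not suffice.
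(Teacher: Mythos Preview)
Your proof is correct and follows essentially the same strategy as the paper's: a second-order Taylor expansion in $\th$, with the linear term handled via Theorem~\ref{thm:psi-deriv} and Slutsky, and the remainder via the uniform bound of Corollary~\ref{cor:psi-deriv}. The only noteworthy difference is organizational: the paper first factors the statistic as $X_T\cdot Y_T\cdot Z_T$ (with $Y_T$ normalized by $[\dot\psi_{\th_0}^\top(u_T)\Sigma_0^*\dot\psi_{\th_0}(u_T)]^{1/2}$) and then Taylor-expands $Y_T$, whereas you expand directly and absorb the normalization into each term; and to control the exponential factors $e^{(\g_{\wh\th_T}-\g_{\th_0})u_T}$ and $e^{(\g_{\wh\th_T}-\g_{\th^*_T})u_T}$ the paper invokes Theorem~\ref{thm:adj} (asymptotic normality of $\wh\g_T$), while you obtain the same $O_p(T^{-1/2})$ rate from the implicit function theorem applied to $\k_\th(\g_\th)=0$, using $\k_\th'(\g_\th)>0$. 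Both routes yield the crucial estimate $(\g_{\wh\th_T}-\g_{\th_0})u_T=O_p(u_T/\sqrt T)=o_p(1)$, which is exactly where the hypothesis $u_T/\sqrt T\to0$ enters.
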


\begin{proof}

Note the following decomposition: 
\begin{align*}
\frac{\sqrt{T}(\psi_{\wh{\th}_T}(u_T) - \psi_\th(u_T))}{\s^*(\wh{\th}_T,u_T)} 
&= \frac{\s^*(\th_0,u_T)}{\s^*(\wh{\th}_T,u_T)}\cdot \frac{\sqrt{T}\l(\psi_{\wh{\th}_T}(u_T) - \psi_{\th_0}(u_T)\r)}{\l[\dot{\psi}^\top_{\th_0}(u_T)\Sigma_0^* \dot{\psi}_{\th_0}(u_T)\r]^{1/2}}
\cdot \frac{\l[\dot{\psi}^\top_{\th_0}(u_T)\Sigma_0^* \dot{\psi}_{\th_0}(u_T)\r]^{1/2}}{\s^*(\th_0,u_T)} \\
&=: X_T\cdot Y_T\cdot Z_T, 
\end{align*}
and that we can check by the Lebesgue convergence theorem that $\s^*:\Theta\times \R_+\to \R$ is continuous under the condition [TD(1)]. 
Therefore it follows by the continuous mapping theorem that $X_n\to 1\ \ a.s.$. Moreover, it is clear that $Z_n\to 1$ by Theorem \ref{thm:psi-deriv}. 
Hence the proof ends if we show that $Y_n\toD N(0,1)$. 

As for $Y_n$, it follows by Taylor's formula that 
\begin{align}
Y_n &= \frac{\dot{\psi}_{\th_0}^\top(u_T)\sqrt{T}(\wh{\th}_T - \th_0)}{\l[\dot{\psi}^\top_{\th_0}(u_T)\Sigma_0^* \dot{\psi}_{\th_0}(u_T)\r]^{1/2}}  
+  \sqrt{T}\frac{(\wh{\th}_T - \th_0)^\top \ddot{\psi}_{\th^*_T}(u_T) (\wh{\th}_T - \th_0)}{\l[\dot{\psi}^\top_{\th_0}(u_T)\Sigma_0^* \dot{\psi}_{\th_0}(u_T)\r]^{1/2}}, \label{Yn}
\end{align}
where $\th^*_T$ is a random point between $\wh{\th}_T$ and $\th_0$. It is clear that the first term in the right-hand side of \eqref{Yn} converges in law to the standard normal distribution since 
we are assuming $\sqrt{T}(\wh{\th}_T - \th_0) \to {\cal N}_{p+1}(0,\Sigma_0^*)$. As for the second term, 
we apply Theorem \ref{thm:psi-deriv}, Corollary \ref{cor:psi-deriv} and Theorem \ref{thm:adj} to obtain that 
\begin{align*}
&\sqrt{T}\frac{(\wh{\th}_T - \th_0)^\top \ddot{\psi}_{\th^*_T}(u_T) (\wh{\th}_T - \th_0)}{\l[\dot{\psi}^\top_{\th_0}(u_T)\Sigma_0^* \dot{\psi}_{\th_0}(u_T)\r]^{1/2}} \\
&= \frac{u_T}{\sqrt{T}}\frac{\sqrt{T}(\wh{\th}_T - \th_0)^\top \l(u_T^{-2}e^{\g_{\th^*_T}u_T}\ddot{\psi}_{\th^*_T}(u_T)\r) \sqrt{T}(\wh{\th}_T - \th_0)}
{\l[\l(u_T^{-1}e^{\g_{\th_0}u_T}\dot{\psi}_{\th_0}(u_T)\r)^\top\Sigma_0^* \l(u_T^{-1}e^{\g_{\th_0}u_T}\dot{\psi}_{\th_0}(u_T)\r)\r]^{1/2}} \cdot e^{-(\g_{\th^*_T} - \g_{\th_0})u_T} \\
&= O_p\l(\frac{u_T}{\sqrt{T}}\r) \l[\exp\l\{-\sqrt{T}(\g_{\wh{\th}_T} - \g_{\th_0})\cdot \frac{u_T}{\sqrt{T}}\r\} + o_p(1)\r] \toP 0. 
\end{align*}
Therefore $Y_n\toD N(0,1)$, and the proof is completed. 

\end{proof}

From these results, we immediately obtain the following confidence intervals. 

\begin{cor}\label{cor:CI}
Let $z_\a$ be the upper $\a$-percentile for ${\cal N}(0,1)$, and suppose the same assumptions as in Theorem \ref{thm:main}. 
Then the sequence of intervals 
\[
I_T^\a :=\l[\psi_{\wh{\th}_T}(u_T) - z_{\a/2}\frac{\s^*(\wh{\th}_T,u_T)}{\sqrt{T}},\ \psi_{\wh{\th}_T}(u_T) + z_{\a/2}\frac{\s^*(\wh{\th}_T,u_T)}{\sqrt{T}}\r]
\]
is an asymptotic confidence interval with the confidence level $(1-\a)$ in the sense that 
\[
\lim_{T\to \infty}\P\l(\psi_{\th_0}(u_T) \in I_T^\a\r) = 1-\a. 
\]
\end{cor}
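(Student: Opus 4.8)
The plan is to derive Corollary~\ref{cor:CI} directly from Theorem~\ref{thm:main} by inverting the pivotal quantity, so the argument is essentially a translation of the central limit statement into an interval statement. First I would observe that, by definition of the interval $I_T^\a$, the event $\{\psi_{\th_0}(u_T)\in I_T^\a\}$ is precisely the event
\[
\l|\psi_{\wh{\th}_T}(u_T) - \psi_{\th_0}(u_T)\r| \le z_{\a/2}\,\frac{\s^*(\wh{\th}_T,u_T)}{\sqrt{T}},
\]
which, after multiplying through by $\sqrt{T}/\s^*(\wh{\th}_T,u_T)$ (a positive quantity, since $\s^*(\th,u)=[\z(\th)^\top\Sigma_0^*\z(\th)]^{1/2}u e^{-\g_\th u}$ is strictly positive for $u>0$ under the standing assumptions), is the same as
\[
\l|\frac{\sqrt{T}(\psi_{\wh{\th}_T}(u_T) - \psi_{\th_0}(u_T))}{\s^*(\wh{\th}_T,u_T)}\r| \le z_{\a/2}.
\]

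Next I would invoke Theorem~\ref{thm:main}, which gives that the left-hand random variable above converges in law to $N(0,1)$ as $T\to\infty$. Writing $\Phi$ for the standard normal cdf and using that $\pm z_{\a/2}$ are continuity points of $\Phi$, the portmanteau theorem yields
\[
\lim_{T\to\infty}\P\l(\l|\frac{\sqrt{T}(\psi_{\wh{\th}_T}(u_T) - \psi_{\th_0}(u_T))}{\s^*(\wh{\th}_T,u_T)}\r| \le z_{\a/2}\r) = \Phi(z_{\a/2}) - \Phi(-z_{\a/2}).
\]
By the definition of $z_{\a/2}$ as the upper $\a/2$-percentile, $\Phi(z_{\a/2}) = 1-\a/2$ and $\Phi(-z_{\a/2}) = \a/2$, so the right-hand side equals $1-\a$. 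Combining this with the equivalence of events from the first step gives $\lim_{T\to\infty}\P(\psi_{\th_0}(u_T)\in I_T^\a) = 1-\a$, which is the claim.

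There is essentially no obstacle here, since all the analytic work — the Cram\'er-type expansion of $\dot\psi_\th$ and $\ddot\psi_\th$, the uniform bound of Corollary~\ref{cor:psi-deriv}, the asymptotic normality of $\wh\g_T$, and the delta-method decomposition — has already been absorbed into Theorem~\ref{thm:main}. The only minor points to be careful about are: that $\s^*(\wh{\th}_T,u_T)>0$ almost surely for $T$ large (so that the division and the sign-preserving multiplication are legitimate), which follows because $u_T\to\infty$ forces $u_T>0$ eventually and $\z(\th)^\top\Sigma_0^*\z(\th)>0$ — here one uses that $\Sigma_0$ is a genuine (non-degenerate) asymptotic covariance and that $\L\dot g_\th(-\g_\th)$ has a nonzero component in the $\a$-directions, which is implicit in the model assumptions; and that the set inequality is two-sided, so one really does recover $\Phi(z_{\a/2})-\Phi(-z_{\a/2})$ rather than a one-sided tail. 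If one wished to be scrupulous about the degenerate case $\z(\th_0)^\top\Sigma_0^*\z(\th_0)=0$, one would note it is excluded by the hypotheses that make Theorem~\ref{thm:main}'s limit a genuine $N(0,1)$; I would simply remark on this rather than belabour it.
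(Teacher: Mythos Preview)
Your argument is correct and is exactly what the paper intends: it states the corollary as an immediate consequence of Theorem~\ref{thm:main} without further proof, and your inversion of the pivotal quantity via the portmanteau theorem is the standard way to spell that out. There is nothing to add.
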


\begin{remark}\label{rem:psi-dot-approx}
In the above $I_n^\a$, we have to still compute $\psi_{\wh{\th}_n}$, which is also difficult to compute. 
In practice, we should approximate it by Theorem \ref{thm:cl-approx}, that is, 
\[
J_T^\a:=\l[C_{\wh{\th}_T}e^{-\g_{\wh{\th}_T}u_T} - z_{\a/2}\frac{\s^*(\wh{\th}_T,u_T)}{\sqrt{T}},\ C_{\wh{\th}_T}e^{-\g_{\wh{\th}_T}u_T} + z_{\a/2}\frac{\s^*(\wh{\th}_T,u_T)}{\sqrt{T}}\r]
\]
This seems a practical confidence interval. Otherwise, we can use some nonparametric estimator of $\psi_\th$; see e.g., Zhang and Yang \cite{zy13,zy14}, Shimizu and Zhang \cite{sz17}. 
\end{remark}

\section{Examples}

\subsection{Exponential claims in the classical model}

Consider the Cram\'er-Lundberg model with exponential claims: 
\[
R_t = u + ct - \sum_{i=1}^{N_t} U_i, 
\]
where $N$ is a Poisson process with the intensity $\la_0>0$, 
$U_i$'s are exponential random variables with mean $\mu_0$, and we put $\th_0=(\mu_0,\la_0)$. 

As is well-known, the ruin probability $\psi_\th(u)$ in this case is written explicitly as 
\[
\psi_\th(u) = \frac{\la\mu}{c}e^{-\g_\th u}, \quad u\ge 0, 
\]
where $\g_\th = 1/\mu - \la/c$ is the adjustment coefficient, and this expression is consistent to the Cram\'er approximation, the right-hand side of \eqref{cl-approx}, 
that is, $C_\th = \la \mu /c$. 
Since 
\begin{align*}
g_\th(x) = \frac{\la}{c}e^{-\frac{x}{\mu}};\quad \wt{g}_\th(x) = \frac{\la}{c}e^{-\frac{\la}{c}x},  
\end{align*}
we have the asymptotic formula for $\dot{\psi}_\th$ by Theorem \ref{thm:psi-deriv} as follows.  
\begin{align}
\dot{\psi}_\th(u) \sim \l(\frac{\la}{c\mu}, \frac{\la\mu}{c^2}\r)^\top u e^{-\g_\th u},\quad u\to \infty. \label{dot-exp}
\end{align}
On the other hand, we can check the above expression by the explicit computation as follows. 
\begin{align*}
\dot{\psi}_\th(u) 
&= \frac{\la\mu}{c}\l(\frac{1}{\mu} + \frac{u}{\mu^2}, \frac{1}{\la} + \frac{u}{c}\r)^\top e^{-\g_\th u} \notag \\
&=  \l(\frac{\la}{c\mu}, \frac{\la\mu}{c^2}\r)^\top u e^{-\g_\th u} + O\l(e^{-\g_\th u}\r),\quad u\to \infty,  
\end{align*}
the first term in the last display is the same as the one given in \eqref{dot-exp}. 

Estimation of parameters is easy. Using the claims data $(U_1,U_2,\dots,U_{N_T})$ on $[0,T]$-time interval, 
we can obtain the maximum likelihood estimator of $\th_0$ as follows: 
\[
\wh{\mu}_T = \frac{1}{N_T}\sum_{i=1}^{N_T} U_i,\quad \wh{\la}_T=\frac{N_T}{T}, 
\]
and can easily check that the estimator $\wh{\th}_T=(\wh{\mu}_T, \wh{\la}_T)$ is asymptotically normal: 
\[
\sqrt{T}\begin{pmatrix} \wh{\mu}_T-\mu_0 \\ \wh{\la}_T - \la_0 \end{pmatrix} \toD {\cal N}_2\l(\mb{O}_{2,1},\rm{diag}(\mu_0^2,\la_0)\r),\quad T\to \infty. 
\]
Therefore we can estimate the adjustment coefficient as 
\[
\wh{\g}_T = \g_{\wh{\th}_T} = \frac{1}{\wh{\mu}_T} - \frac{\wh{\la}}{c}
\]
Noticing that
\[
\s^*(\th,u) = \frac{\la}{c}\sqrt{1 + \frac{\la\mu^2}{c^2}}, 
\]
we have the asymptotic $\a$-confidence interval as follows: 
\[
I^\a_T = \l[\frac{\wh{\la}_T\wh{\mu}_T}{c}e^{-\wh{\g}_T u} \pm z_{\a/2}\frac{u_T\,e^{-\wh{\g}_T u_T}}{\sqrt{T}}\frac{\wh{\la}_T}{c}\sqrt{1 + \frac{\wh{\la}_T\wh{\mu}_T^2}{c^2}}\r]
\]

\subsection{The classical model with diffusion perturbation}

Consider the classical risk model perturbed by diffusions, where the L\'evy subordinator $S$ in \eqref{model} is a compound Poisson process with positive jumps: 
\[
R_t = u + ct + \s_0 W_t - \sum_{i=1}^{N_t} U_i, 
\]
where $W$ is a Wiener process; $\s_0\ge 0$ is a parameter; $U_i$'s are $(0,\infty)$-valued random variables with distribution $F_{\b_0}$ and $\b_0\in \R^q$ is a parameter. 
Put $\a_0 = (\b_0,\la_0)$ and $\th_0=(\a_0,D_0)$. As usual, we assume that $W$, $N$ and $U_i$'s are independent each other. 

In this setting, it would be reasonable to assume that all the jumps $\{U_1,U_2,\dots,U_{N_T}\}$ are observed since each $U_i$ represents $i$th claim amount, and 
the discrete observation $R^n:=\{R_{ih}\,|\,i=0,1,2,\dots,n,\ h>0\}$ with $T=nh$ is available. 
That is, we adopt the sampling scheme (I) described in Section \ref{sec:levy surplus}. 

We can estimate $\la_0$ as the maximum likelihood estimator (MLE) $\wh{\la}_T = N_T/T$. 
Moreover, under certain regularities, we can also estiamte $\b$ as the MLE, say $\wh{\b}_T$. 
Then, as is well known, the asymptotic distribution of $\wh{\a}_T = (\wh{\b}_T,\wh{\la}_T)$ is obtained as follows. 
\[
\sqrt{T}(\wh{\a}_T - \a_0) \toD {\cal N}_{q+1}\l(\mb{O}_{q+1,1},\Sigma_0\r),\quad \Sigma_0=\begin{pmatrix} I^{-1}(\b_0) & \mb{O}_{q,1}\\ \mb{O}_{1,q} & \la_0\end{pmatrix}, 
\]
where $I(\b_0)$ is the Fisher information matrix for $\b_0$ while its invertibility is assumed. 

According to Lemma 3.1 with Remark 3.2 in Shimizu \cite{s11}, we can etimate the diffusion parameter $D_0$ as follows: 
\[
\wh{D}_T = \frac{1}{2T}\l[\sum_{i=1}^n |\D_i^n R|^2 - \sum_{i=1}^{N_s} U_i^2\r], 
\]
where $\D_i^n R:=R_{ih} - R_{(i-1)h}$. Then it holds for any $s \in [0,T]$ that 
\[
\sqrt{T}(\wh{D}_T - D_0) \toP 0
\]
as $T=nh\to \infty$, $h\to 0$ with $nh^2\to 0$. Hence we have that 
\[
\sqrt{T}\begin{pmatrix} \wh{\a}_T - \a_0 \\ \wh{D}_T - D_0 \end{pmatrix} \toD {\cal N}_{q+2}(0,\Sigma_0^*),\quad 
\Sigma_0^*=\begin{pmatrix} I^{-1}(\b_0) & \mb{O}_{q,1} & 0 \\ \mb{O}_{1,q} & \la_0 & 0 \\ 0& 0& 0\end{pmatrix}, 
\]

The estimator of the adjustment coefficient $\g_0$ is given as a positive solution to 
\[
-cr + \wh{D}_T^2r^2 + \wh{\la}_T\int_0^\infty (e^{rz} - 1)F_{\wh{\b}_T}(\df z) = 0. 
\]
As a special case, we shall consider the exponential claims: 
\[
F_{\b_0}(x) = 1 - e^{x/\mu_0}\quad (\b_0=\mu_0)
\]
Then the above equation is given by 
\[
-cr + + \wh{D}_T^2r^2 + \frac{\wh{\la}_T\wh{\mu}}{1-\wh{\mu}_T r} = 0
\]

In this model, we can compute $g_\th$ and $\dot{g}_\th$ in Theorem \ref{thm:psi-deriv} as follows: 
\begin{align*}
g_\th(x) = \frac{\la\mu}{D - c\mu}\l(e^{-\frac{c}{D}x} - e^{-\frac{x}{\mu}}\r)
\end{align*}
and 
\begin{align*}
\frac{\p}{\p \mu}g_\th(x) &= \frac{\la}{\mu(c\mu-D)}xe^{-\frac{x}{\mu}} + \frac{\la D}{(c\mu-D)^2}\l(e^{-\frac{c}{D}x} - e^{-\frac{x}{\mu}}\r); \\
\frac{\p}{\p \la}g_\th(x) &= \frac{\mu}{c\mu-D}\l(e^{-\frac{x}{\mu}} - e^{-\frac{c}{D}x}\r); \\
\frac{\p}{\p D}g_\th(x) &= \frac{\la\mu}{(c\mu-D)^2}\l(e^{-\frac{x}{\mu}} - e^{-\frac{c}{D}x}\r) - \frac{\la\mu c}{D^2(c\mu - D)}xe^{-\frac{c}{D}x}. 
\end{align*}
Therefore we have that 
\begin{align*}
\z(\th) = C_\th\l[\mu_\th^{-1}\L \dot{g}_\th(-\g_\th)\r] := C_\th \mu_\th^{-1} (L_\mu,L_\la,L_D)^\top, 
\end{align*}
where, by $p_{\mu,\th}=(1/\mu-\g_\th)^{-1}$, $p_{D,\th}=(c/D - \g_\th)^{-1}$, 
\begin{align*}
L_\mu&=\frac{\la D}{(c\mu - D)^2}(p_{\mu,\th} - p_{D,\th}) + \frac{\la}{\mu(c\mu -D)}p_{\mu,\th}^2, \\
L_\la&=\frac{\mu}{c\mu - D}(p_{\mu,\th} - p_{D,\th}), \\
L_{D}&=\frac{\la\mu}{(c\mu - D)^2}(p_{\mu,\th} - p_{D,\th}) - \frac{\la\mu c}{D^2(c\mu - D)^2}p_{D,\th}^2, \\
\mu_\th &= \frac{\la\mu}{c\mu - D}(p_{\mu,\th}^2 - p_{D,\th}^2), \\
C_\th&= \frac{\la\mu}{\la\mu\l(1-\mu\g\r)^{-2} - c + 2D\g_\th}. 
\end{align*}
Using the estimator $\wh{\th}_T$, we have that 
\begin{align*}
\s^*(\wh{\th}_T,u) &= \z(\wh{\th}_T)^\top \begin{pmatrix}\wh{\mu}_T & 0 & 0\\ 0 & \wh{\la}_T & 0 \\ 0&0&0\end{pmatrix} \z(\wh{\th}_T) 
= \wh{\mu}_T\wh{L}_\mu^2 + \wh{\la}_T\wh{L}_\la^2, 
\end{align*}
where $\wh{L}_\mu$ and $\wh{L}_\la$ is estimators of $L_\mu$ and $L_\la$, respectively, such that the unknown parameters therein are replaced with their estimators. 
Therefore the asymptotic $(1-\a)$-confidence interval $J_T^\a$ is given as follows: 
\[
J_T^\a:=\l[C_{\wh{\th}_T}e^{-\g_{\wh{\th}_T}u_T} \pm z_{\a/2}\frac{\wh{\mu}_T\wh{L}_\mu^2 + \wh{\la}_T\wh{L}_\la^2}{\sqrt{T}}u_Te^{-\wh{\g}_T u_T}\r]
\]

\subsection{Gamma subordinator}

Let us consider a L\'evy insurance risk model 
\begin{align}
R_t = u + ct - Z_t, \label{subord-model}
\end{align}
where $Z$ is a gamma process: 
\[
\P(Z_t\in \df x) = \frac{b^{a t}}{\G(a t)} x^{a t -1} e^{-b x}, \quad z\ge 0, 
\]
which has a monotonically increasing path with infinitely many jumps in any finite time interval. 
The L\'evy measure of $X$ is given as 
\[
\wt{\nu}_\a(z)\,\df z :=a z^{-1}e^{-b z}\,\df z,\quad z> 0,
\]
where $\a=(a,b)$ is unknown parameters. 
To estimate the parameter $\a$, we suppose the claims whose sizes are larger than $\e_T>0$: $J_Z(\e_T,\infty) = \{\D Z_t\,|\,\D Z_t > \e_T\}$. 
Since $N_T(\e_T):=\# J_Z(\e_T,\infty) < \infty$ for each $\e_T>0$, we put the jump size as 
\[
J_Z(\e_T,\infty)=\{U_k(\e_T)\,|\,k=1,2,\dots,N_T(\e_T)\}, 
\]
where $U_k(\e_T)$ is the $k$th jump size that is larger than $\e_T$. 

According to Basawa and Brockwell \cite{bb78}, the likelihood function is given by 
\[
L(\a) = \exp\l(-t \int_{\e_T}^\infty \wt{\nu}_\a(z)\,\df z \r)\prod_{k=1}^{N_T(\e_T)} \wt{\nu}_\a(U_k(\e_T)), 
\]
and the maximum likelihood estimator $\wh{\a}_T = (\wh{a}_T,\wh{b}_T)$ is given by solving the equation that 
\[
\int_0^\infty \frac{\wh{b}_Te^{-\wh{b}_T u}}{u + \e_T}\,\df u = \frac{N_T(\e_T)}{\sum_{k=1}^{N_T(\e_T)} U_k(\e_T)},\quad \wh{a}_T = \frac{\wh{b}_T}{T}e^{\wh{b}_T \e_T} \sum_{k=1}^{N_T(\e_T)} U_k(\e_T). 
\]
Then, assuming that $\e_T\to \e>0$, it holds that 
\[
\sqrt{T}(\wh{\a}_T - \a_0) \toD {\cal N}\l(\begin{pmatrix}0\\0\end{pmatrix},\begin{pmatrix}\s_{aa} & \s_{ab}\\ \s_{ba} & \s_{bb}\end{pmatrix} \r), 
\]
where 
\begin{align*}
\s_{aa} &= ae^{-b \e}(1 + b\e)(b \xi)^{-1},\quad \s_{bb}= (a^2\xi)^{-1}\int_\e^\infty \wt{\nu}_\a(z)\,\df z,\quad \s_{ab}=-e^{-b\e}(\b\xi)^{-1}, \\
\xi &= b^{-2}e^{-b\e}\l[(1+b\e)a^{-1}\int_\e^\infty \wt{\nu}_\a(z)\,\df z - e^{-b\e}\r]. 
\end{align*}

Assuming that the adjustment coefficient $\g_\th$ exists and satisfies that $\g_\th < b$. 

%\subsection{Stable subordinator}

%Consider the same type of model as in \eqref{subord-model} with $Z$ a spectrally positive stable process, that is, 
%the Laplace transform of $Z_t$ is given as follows.  

%\ \vspace{2mm}\\
%\begin{flushleft}
%{\large {\bf Acknowledgement}}
%\end{flushleft}
% The two authors would like to thank two referees for their valuable suggestions which significantly improved the paper.
%The second author is supported by JSPS KAKENHI Grant-in-Aid for Scientific Research (C), Grant Number JP15K05009.

%%%%%%%%%%%%%%%%%%%%%%%%%%

\end{document}